\documentclass[leqno]{article}

\usepackage[english]{babel}
\usepackage[utf8]{inputenc}
\usepackage{amsmath}
\usepackage{amssymb}
\usepackage{amsfonts}
\usepackage{enumerate}
\usepackage{vmargin}
\usepackage[all]{xy}
\usepackage{mathrsfs}
\usepackage{mathtools}
\usepackage{lmodern}
\usepackage{slashed}
\usepackage[colorlinks=true,linkcolor=blue,pagebackref=true]{hyperref}%
\setmarginsrb{3cm}{3cm}{3.5cm}{3cm}{0cm}{0cm}{1.5cm}{3cm}
\usepackage{comment}

\usepackage{accents}

\newlength{\dhatheight}

\newcommand{\Cbb}{\ensuremath{\mathbb{C}}}
\newcommand{\C}{\mathrm{C}}

\newcommand{\B}{\mathrm{B}} 

\newcommand{\Q}{\mathrm{Q}}

\let\L\relax 
\newcommand{\L}{\mathrm{L}}

\newcommand{\M}{\mathrm{M}}

\newcommand{\VN}{\mathrm{VN}} 
\newcommand{\la}{\langle}
\newcommand{\ra}{\rangle}
\newcommand{\WH}{\mathrm{WH}}
\newcommand{\WA}{\mathrm{WA}} 

\renewcommand{\leq}{\ensuremath{\leqslant}}
\renewcommand{\geq}{\ensuremath{\geqslant}}
\newcommand{\qed}{\hfill \vrule height6pt  width6pt depth0pt}

\newcommand{\norm}[1]{\left\Vert#1\right\Vert}

\newcommand{\co}{\colon}

\newcommand{\ot}{\otimes}
\newcommand{\ovl}{\overline}
\newcommand{\otvn}{\ovl\ot}

\newcommand{\ov}{\overset}

\newcommand{\epsi}{\varepsilon}
\renewcommand{\d}{\mathop{}\mathopen{}\mathrm{d}} 
\renewcommand{\d}{\mathop{}\mathopen{}\mathrm{d}}

\DeclareMathOperator{\supp}{supp} 

\newtheorem{thm}{Theorem}[section]

\newtheorem{prop}[thm]{Proposition}

\newtheorem{cor}[thm]{Corollary}
\newtheorem{lemma}[thm]{Lemma}

\newtheorem{remark}[thm]{Remark}
\newtheorem{example}[thm]{Example}

\newenvironment{proof}[1][]{\noindent {\it Proof #1} : }{\hbox{~}\qed
\smallskip
}

\usepackage{tocloft}
\numberwithin{equation}{section}
\usepackage[nottoc,notlot,notlof]{tocbibind}

\let\OLDthebibliography\thebibliography
\renewcommand\thebibliography[1]{
  \OLDthebibliography{#1}
  \setlength{\parskip}{0pt}
  \setlength{\itemsep}{0pt plus 0.3ex}
}

\newcommand\reallywidehat[1]{\arraycolsep=0pt\relax%
\begin{array}{c}
\stretchto{
  \scaleto{
    \scalerel*[\widthof{\ensuremath{#1}}]{\kern-.5pt\bigwedge\kern-.5pt}
    {\rule[-\textheight/2]{1ex}{\textheight}} 
  }{\textheight} %
}{0.5ex}\\           
#1\\                 
\rule{-1ex}{0ex}
\end{array}
}

\begin{document}
\selectlanguage{english}
\title{\bfseries{Multiplicativity of the weak Haagerup constant}}
\date{}
\author{\bfseries{C\'edric Arhancet}}
\maketitle

\begin{abstract}
We prove that the weak Haagerup constant is multiplicative with respect to direct products, namely $\Lambda_{\mathrm{WH}}(G \times H)=\Lambda_{\mathrm{WH}}(G) \Lambda_{\mathrm{WH}}(H)$ for any locally compact groups $G$ and $H$. Knudby previously proved the submultiplicative inequality and asked whether equality always holds. 
\end{abstract}

\makeatletter
\renewcommand{\@makefntext}[1]{#1}
\makeatother
\footnotetext{2020 {\it Mathematics subject classification:} 43A22, 22D05, 46L07.\\
{\it Key words}: weak Haagerup property, Herz--Schur multipliers, weak amenability, direct products.}

{
\hypersetup{linkcolor=blue}
\tableofcontents
}

\section{Introduction}
\label{sec:Introduction}

The weak Haagerup property was introduced by Knudby in \cite[Definition 1.1]{Knu14} and \cite[Definition 4.1 p.~3476]{Knu16} as an approximation property for locally compact groups which is weaker than both weak amenability and the Haagerup property. Recall that a locally compact group $G$ has the weak Haagerup property if there exist a constant $C > 0$ and a net $(u_\alpha)_{\alpha \in A}$ in $\B_2(G) \cap \C_0(G)$ such that
\begin{equation}
\label{eq-def-WH-intro}
\norm{u_\alpha}_{\B_2(G)} 
\leq C
\end{equation}
for any $\alpha \in A$ and $u_\alpha \to 1_G$ uniformly on compact subsets of $G$. Here $\B_2(G)$ is 
the space of Herz-Schur multipliers on $G$, defined in Section \ref{sec:preliminaries}. The infimum of the constants $C$ in \eqref{eq-def-WH-intro} is denoted by $\Lambda_{\WH}(G)$. If no such net exists we write $\Lambda_\WH(G) = \infty$. 

Concerning direct products, Knudby observed in \cite[Proposition 5.5 p.~3480]{Knu16} that for any locally compact groups $G$ and $H$ we have the inequality
\begin{equation}
\label{eq-known-submultiplicativity}
\Lambda_{\WH}(G \times H)
\leq
\Lambda_{\WH}(G) \Lambda_{\WH}(H).
\end{equation}
He asked in \cite[Remark 5.6 p.~3480]{Knu16} whether equality holds in \eqref{eq-known-submultiplicativity}. The corresponding equality $\Lambda_{\WA}(G \times H) = \Lambda_{\WA}(G) \Lambda_{\WA}(H)$ for the weak amenability constant $\Lambda_{\mathrm{WA}}$ was proved earlier by Cowling and Haagerup \cite[Corollary 1.5 p.~514]{CoH89}. Recall that a locally compact group $G$ is weakly amenable if there exists a net $(u_\alpha)$ of compactly supported Herz-Schur multipliers on $G$, uniformly bounded in Herz-Schur norm $\norm{\cdot}_{\B_2(G)}$ such that $u_\alpha \to 1$ uniformly on compacts. The least uniform bound on the norms of such nets (if such a bound exists at all) is the weak amenability constant $\Lambda_{\mathrm{WA}}(G)$ of $G$. 

The main result of this paper gives a positive answer to this question.

\begin{thm}
\label{thm-main}
Let $G$ and $H$ be locally compact groups. Then
\begin{equation}
\label{eq-main}
\Lambda_{\WH}(G \times H)
= \Lambda_{\WH}(G) \Lambda_{\WH}(H).
\end{equation}
\end{thm}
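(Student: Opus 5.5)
The inequality $\Lambda_{\WH}(G \times H) \leq \Lambda_{\WH}(G)\Lambda_{\WH}(H)$ is \eqref{eq-known-submultiplicativity}, so only the reverse inequality remains. The plan is to rewrite $\Lambda_{\WH}$ as the norm of a functional by duality and then test that norm against elementary tensors.

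Put $X_G := \B_2(G) \cap \C_0(G)$. It is a closed subspace of $\B_2(G)$, because $\norm{\cdot}_\infty \leq \norm{\cdot}_{\B_2(G)}$. I would first prove the following dual description of the constant:
$$\Lambda_{\WH}(G) = \sup\bigl\{ |\langle 1_G, f\rangle| : f \in \Q(G),\ \norm{f|_{X_G}}_{X_G^*} \leq 1 \bigr\}.$$
Here $\Q(G)$ is the predual of $\B_2(G) = \M_0\mathrm{A}(G)$, and $\langle 1_G, f \rangle$ is the pairing with the constant function $1_G \in \B_2(G)$.
\begin{itemize}
\item The key input is the standard fact that, on bounded subsets of $\B_2(G)$, weak$^*$ convergence coincides with uniform convergence on compacta.
\item Given this, $\Lambda_{\WH}(G) \leq C$ if and only if $1_G$ lies in the weak$^*$-closure of the ball $C \cdot \mathrm{Ball}(X_G)$.
\item The bipolar theorem then turns this into: $|\langle 1_G, f\rangle| \leq C \sup_{u \in \mathrm{Ball}(X_G)} |\langle u, f\rangle|$ for every $f \in \Q(G)$.
\item A closure argument in the weak$^*$-closure then gives both directions, including $\Lambda_{\WH}(G) = \infty$ exactly when no finite $C$ works.
\end{itemize}

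With this formula, fix $\epsi > 0$ and choose $f \in \Q(G)$ and $g \in \Q(H)$ that are almost extremal for $G$ and for $H$, normalized so that $\norm{f}_{X_G^*} = \norm{g}_{X_H^*} = 1$. Consider the functional $f \otimes g$ on $\B_2(G \times H)$.
\begin{itemize}
\item Since $1_{G\times H} = 1_G \otimes 1_H$, we get $\langle 1_{G \times H}, f \otimes g\rangle = \langle 1_G,f\rangle\langle 1_H,g\rangle$, which is at least $(\Lambda_{\WH}(G)-\epsi)(\Lambda_{\WH}(H)-\epsi)$. In the infinite case one replaces these quantities by arbitrarily large numbers.
\item It then suffices to show $\norm{(f \otimes g)|_{X_{G\times H}}}_{X_{G \times H}^*} \leq 1$, i.e.\ $|\langle u, f \otimes g\rangle| \leq \norm{u}_{\B_2(G\times H)}$ for every $u \in X_{G \times H}$.
\end{itemize}
Two preliminary points need checking. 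First, $f \otimes g$ really defines an element of $\Q(G\times H)$; this follows from the normal slice-map property of $\M_0\mathrm{A}$ on products, as in Cowling--Haagerup. Second, weak$^*$-continuity of $f \otimes g$ is needed for the duality formula on $G \times H$.

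To bound $\langle u, f\otimes g\rangle$, I would slice in two steps. Define $w(s) := \langle u(s,\cdot), g\rangle$. This uses that each section $u(s,\cdot)$ lies in $X_H$, which holds because restricting a Herz--Schur multiplier to a coset keeps it a multiplier and keeps it in $\C_0$. Then $\langle u, f \otimes g\rangle = \langle w, f\rangle$. The desired estimate follows from two claims about $w$:
\begin{itemize}
\item $w \in \C_0(G)$, which follows from the joint $\C_0$ property of $u$ together with the uniform boundedness of the sections;
\item $\norm{w}_{\B_2(G)} \leq \norm{u}_{\B_2(G\times H)}\norm{g}_{X_H^*}$.
\end{itemize}

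The second claim is the main obstacle, because $g$ is controlled only as a functional on the subspace $X_H$, not on all of $\B_2(H)$. To handle it I would use the operator space picture.
\begin{itemize}
\item Bounded functionals are automatically completely bounded with the same norm, and Hahn--Banach extends $g$ to a functional $\tilde g$ on $\B_2(H)$ of the same norm. Since all sections of $u$ lie in $X_H$, the extension does not change $w$.
\item Identify $\B_2(G \times H)$ completely isometrically with the completely bounded multipliers of $\mathrm{A}(G\times H) = \mathrm{A}(G) \widehat{\otimes} \mathrm{A}(H)$. Equivalently, use the Bożejko--Fendler factorization $u(s,t;s',t') = \langle \alpha(s,t), \beta(s',t')\rangle$ with $\norm{\alpha}_\infty\norm{\beta}_\infty = \norm{u}_{\B_2}$.
\item Then show that the slice map $\mathrm{id} \otimes \tilde g$ is contractive into $\M_0\mathrm{A}(G)$ with norm at most $\norm{\tilde g}$. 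The reason is that slicing a Schur multiplier of the form $\mathrm{id}\otimes(\text{Schur multiplier on }H)$ by a completely bounded functional yields a Schur multiplier on $G$ of no larger norm.
\end{itemize}
Once this slicing estimate is in place, $|\langle w, f\rangle| \leq \norm{w}_{\B_2(G)}\norm{f}_{X_G^*} \leq \norm{u}_{\B_2(G \times H)}$. This gives $\Lambda_{\WH}(G\times H) \geq (\Lambda_{\WH}(G)-\epsi)(\Lambda_{\WH}(H)-\epsi)$ for every $\epsi > 0$, and letting $\epsi \to 0$ proves \eqref{eq-main}.
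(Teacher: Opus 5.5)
Your architecture matches the paper's: a polar formula for $\Lambda_{\WH}$ in terms of $p_G(q)=\norm{q|_{X_G}}_{X_G^*}$, a slice estimate controlled by this restricted norm rather than the $\Q$-norm, and a test on elementary tensors. Two steps do not hold up as written, however. First, the ``standard fact'' you start from is false. On $G=\mathbb{R}$ the characters $u_n(x)=e^{inx}$ satisfy $\norm{u_n}_{\B_2(\mathbb{R})}=1$ and $u_n\to 0$ weak* (by Riemann--Lebesgue and density of $\L^1$ in $\Q$), although $u_n(0)=1$. For bounded nets, only the implication ``uniform convergence on compacta $\Rightarrow$ weak* convergence'' holds. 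The equivalence you need, $\Lambda_{\WH}(G)\le C\iff 1_G\in\overline{C\,\mathrm{Ball}(X_G)}^{w^*}$, is still true. Its nontrivial direction requires an extra averaging argument, and it is exactly Knudby's result, Proposition \ref{prop-weakstar-WH}, which the paper quotes.

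Second, and more seriously, the estimate $\norm{w}_{\B_2(G)}\le\norm{u}_{\B_2(G\times H)}\,p_H(g)$, which you rightly call the main obstacle, is asserted rather than proved. The Schur multiplier of $u$ on $\B(\L^2(G\times H))$ is not of the form $\mathrm{id}\ot(\text{multiplier on }H)$. You also identify no citable slice-map theorem that applies to the non-normal functional $\tilde g$ produced by Hahn--Banach, and complete boundedness plays no role. Your intuition is right, but the content of this step is the compression of Lemma \ref{lemma-matrix-slice}, with $G$ and $H$ swapped.

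For $s_1,\dots,s_m\in G$, $B=[b_{kl}]\in\M_m$ and $x,y\in\Cbb^m$, put $v(t)=\sum_{k,l}\overline{y_k}b_{kl}x_l\,u(s_l^{-1}s_k,t)$, so that $\la M_{[w(s_l^{-1}s_k)]}(B)x,y\ra=g(v)$. Now take $h_1,\dots,h_n\in H$, $A=[a_{ij}]\in\M_n$, $K_{(k,i),(l,j)}=u(s_l^{-1}s_k,h_j^{-1}h_i)$ and $W_x\colon\Cbb^n\to\Cbb^m\ot\Cbb^n$, $z\mapsto x\ot z$. Then $W_y^*M_K(B\ot A)W_x=[a_{ij}v(h_j^{-1}h_i)]_{i,j=1}^n$, so \eqref{eq-finite-B2} gives $\norm{v}_{\B_2(H)}\le\norm{u}_{\B_2(G\times H)}\norm{B}\norm{x}\norm{y}$. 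Since $v$ is a finite combination of sections, $v\in X_H$ and $|g(v)|\le p_H(g)\norm{v}_{\B_2(H)}$. Applying \eqref{eq-finite-B2} on $G$ then gives the bound, and Hahn--Banach becomes unnecessary.

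Finally, by density you may take $f\in\C_c(G)$ and $g\in\C_c(H)$ in the polar formula, as the paper does. Then $f\ot g\in\C_c(G\times H)$ and $\la u,f\ot g\ra=\la w,f\ra$ is just Fubini, so no appeal to slice-map results of Cowling--Haagerup is needed.
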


It is known \cite[Theorem B p.~6943]{HaK15} that a connected simple Lie group $G$ has the weak Haagerup property if and only if the real rank of $G$ is at most one. For connected simple Lie groups, Haagerup and Knudby proved in \cite[Theorem C p.~6943]{HaK15} that $\Lambda_{\WH}(G)=\Lambda_{\WA}(G)$. In the noncompact real-rank-one cases with finite center, the numerical values of these constants are those computed by Cowling and Haagerup in \cite{CoH89}. See also \cite{Mur08} for a simpler proof. It is proved in \cite[Corollary 3.5 p.~18]{DaG26} that the free product $G*H$ of weakly Haagerup discrete groups $G$ and $H$ with $\Lambda_\WH(G)=\Lambda_\WH(H) = 1$ has weak Haagerup property with $\Lambda_\WH(G*H) = 1$.

\paragraph{Approach of the paper} 
We briefly describe the argument. Recall that by \cite[Proposition 1.10 p.~466]{DCH85} the space $\B_2(G)$ is canonically the dual Banach space of a Banach space $\Q(G)$. For any $q \in \Q(G)$, we define
\begin{equation}
\label{eq-def-p-intro}
p_G(q)
\ov{\mathrm{def}}{=}
\sup\big\{ |\la q,u\ra_{\Q(G),\B_2(G)}| : u \in \B_2(G) \cap \C_0(G), \norm{u}_{\B_2(G)} \leq 1 \big\}.
\end{equation}
Using the weak* characterization of the weak Haagerup property from \cite[Proposition 4.2 p.~3477]{Knu16} and a separation argument, we obtain the polar formula
\begin{equation}
\label{eq-polar-intro}
\Lambda_{\WH}(G)
=
\sup\left\{
\left|\int_G f(s) \d\mu_G(s)\right| : f \in \C_c(G), p_G(f) \leq 1 \right\}.
\end{equation}
The main point is then an estimate for slice maps. If $f \in \C_c(G)$ and $w \in \B_2(G \times H) \cap \C_0(G \times H)$, we define
\begin{equation}
\label{eq-slice-intro}
S_f(w)(t)
\ov{\mathrm{def}}{=}
\int_G f(s)w(s,t) \d\mu_G(s),
\quad t \in H.
\end{equation}
We prove that $S_f(w)$ belongs to the space $\B_2(H) \cap \C_0(H)$ and that
\begin{equation}
\label{eq-slice-estimate-intro}
\norm{S_f(w)}_{\B_2(H)}
\leq
p_G(f) \norm{w}_{\B_2(G \times H)}.
\end{equation}
In fact, the norm of the map $S_f \co \B_2(G \times H) \cap \C_0(G \times H) \to \B_2(H) \cap \C_0(H)$ is equal to $p_G(f)$. The proof of \eqref{eq-slice-estimate-intro} uses the finite matrix characterization of Herz--Schur multipliers. More precisely, a suitable matrix coefficient of a finite restriction of the Herz--Schur kernel associated with $w$ is shown to define an element of $\B_2(G) \cap \C_0(G)$ with the required norm estimate.

Combining \eqref{eq-slice-estimate-intro} with Fubini's theorem gives, for any functions $f \in \C_c(G)$ and $g \in \C_c(H)$,
\begin{equation}
\label{eq-p-tensor-intro}
p_{G\times H}(f \ot g)
=p_G(f)p_H(g).
\end{equation}
The inequality $\Lambda_{\WH}(G) \Lambda_{\WH}(H) \leq \Lambda_{\WH}(G \times H)$ follows from \eqref{eq-polar-intro} and \eqref{eq-p-tensor-intro}, whereas \eqref{eq-known-submultiplicativity} gives the reverse inequality.


\paragraph{Structure of the paper}
The paper is organized as follows. In Section \ref{sec:preliminaries}, we recall the background on Herz--Schur multipliers and the weak Haagerup property. Section \ref{sec:polar} contains the polar description \eqref{eq-polar-intro}. In Section \ref{sec:slices}, we establish the slice estimate \eqref{eq-slice-estimate-intro}. The proof of Theorem \ref{thm-main} is completed in Section \ref{sec:multiplicativity}. We finish with some consequences in Section \ref{sec:consequences}.

\section{Preliminaries}
\label{sec:preliminaries}


\paragraph{Schur multipliers} Let $(\Omega,\mu)$ be a (localizable) measure space. We will use the space $S^\infty_\Omega \ov{\mathrm{def}}{=} S^\infty(\L^2(\Omega))$ of compact operators, its dual $S^1_\Omega$ and the space $\B(\L^2(\Omega))$ of bounded operators on the complex Hilbert space $\L^2(\Omega)$. If $f \in \L^2(\Omega \times \Omega)$, we denote the associated Hilbert-Schmidt operator by
\begin{equation}
\label{Def-de-Kf}
\begin{array}{cccc}
  K_f  \co &  \L^2(\Omega)   &  \longrightarrow   & \L^2(\Omega)   \\
    &   \xi  &  \longmapsto       &  \int_\Omega f(\cdot,y)\xi(y) \d\mu(y)  \\
\end{array}.	
\end{equation}
We say that a function $\varphi \in \L^\infty(\Omega \times \Omega)$ induces a measurable Schur multiplier on $\B(\L^2(\Omega))$ if the map $S^2_\Omega \to \B(\L^2(\Omega))$, $K_{f} \mapsto K_{\varphi f}$ induces a bounded operator from $S^\infty_\Omega$ into $\B(\L^2(\Omega))$. In this case, the operator $S^\infty_\Omega \to \B(\L^2(\Omega))$, $K_{f}\mapsto K_{\varphi f}$ admits by \cite[Lemma A.2.2 p. 360]{BLM04} a unique weak* extension $M_\varphi \co \B(\L^2(\Omega)) \to \B(\L^2(\Omega))$ called the Schur multiplier associated with $\varphi$. 
We refer to the surveys \cite{ToT10} and \cite{Tod15} for more information. See also \cite{Spr04}.

\begin{example} \normalfont
If the set $\Omega=\{1,\ldots,n\}$ is equipped with the counting measure, we can identify the space $\B(\L^2(\Omega))$ with the matrix algebra $\M_n$. Then each operator $K_{f}$ identifies to the matrix $[f(i,j)]$. A Schur multiplier is given by a map $M_\varphi \co \M_n \to \M_n$, $[f(i,j)] \mapsto [\varphi(i,j)f(i,j)]$. 
\end{example}

\paragraph{Herz-Schur multipliers} Let $G$ be a locally compact group equipped with a left Haar measure $\mu_G$. Recall that a continuous function $u \co G \to \mathbb{C}$ is a Herz--Schur multiplier if the kernel $(s,t) \mapsto u(t^{-1}s)$ defines a bounded Schur multiplier $M_{[u(t^{-1}s)]} \co \B(\L^2(G)) \to \B(\L^2(G))$,  on the space $\B(\L^2(G))$ of bounded operators on the complex Hilbert space $\L^2(G)$. The set of Herz-Schur multipliers on $G$ is denoted $\B_2(G)$. It is a unital Banach algebra, when equipped with the Herz-Schur norm $\norm{u}_{\B_2(G)} \ov{\mathrm{def}}{=} \norm{M_{[u(t^{-1}s)]}}_{\B(\L^2(G)) \to \B(\L^2(G))}$. We refer to \cite[p.~466]{BrO08}, \cite{BoF84}, \cite[Chapter 6]{Pis01} and \cite{Tod14} for more information. According to \cite[Proposition 3.1 p.~3475]{Knu16}, we have the following finite matrix characterization. A continuous function $u \co G \to \Cbb$ is a Herz--Schur multiplier if and only if
\begin{equation}
\label{eq-finite-B2-criterion}
\sup\left\{
\norm{M_{[u(s_j^{-1}s_i)]_{i,j=1}^n}}_{\M_n \to \M_n} : n \geq 1, s_1,\ldots,s_n \in G \right\}
<\infty.
\end{equation}
In this case, we have
\begin{equation}
\label{eq-finite-B2}
\norm{u}_{\B_2(G)}
= \sup\left\{
\norm{M_{[u(s_j^{-1}s_i)]_{i,j=1}^n}}_{\M_n \to \M_n} : n \geq 1, s_1,\ldots,s_n \in G
\right\}.
\end{equation}
In particular, for any $u \in \B_2(G)$, we have
\begin{equation}
\label{eq-uniform-vs-B2}
\norm{u}_{\L^\infty(G)}
\leq \norm{u}_{\B_2(G)}.
\end{equation}
Recall that by \cite[Proposition 1.10 p.~466]{DCH85} combined with \cite{BoF84} the Banach space $\B_2(G)$ admits a canonical Banach space predual $\Q(G)$. More precisely, the space $\Q(G)$ is the completion of $\L^1(G)$ for the norm
\begin{equation}
\label{eq-Q-norm}
\norm{f}_{\Q(G)}
\ov{\mathrm{def}}{=}
\sup\left\{
\left|\int_G f(s)u(s) \d\mu_G(s)\right| : u \in \B_2(G), \norm{u}_{\B_2(G)}\leq 1
\right\}.
\end{equation}
Then $\Q(G)^* = \B_2(G)$ isometrically, where the duality between $f \in \L^1(G)$ and $u \in \B_2(G)$ is given by
\begin{equation}
\label{eq-duality-Q-B2}
\la f,u \ra_{\Q(G),\B_2(G)}
=
\int_G f(s)u(s) \d\mu_G(s).
\end{equation}
Furthermore, \eqref{eq-uniform-vs-B2} implies that
\begin{equation}
\label{eq-Q-L1}
\norm{f}_{\Q(G)}\leq \norm{f}_{\L^1(G)},
\quad f \in \L^1(G).
\end{equation}
Consequently, the subspace $\C_c(G)$ is dense in $\Q(G)$.

Let $H$ be a locally compact group. Whenever $G$ and $H$ are equipped with left Haar measures
$\mu_G$ and $\mu_H$, respectively, we equip the direct product $G\times H$ with the product left Haar measure $\mu_G \ot \mu_H$. We record two elementary facts which will also be used. If $u \in \B_2(G)$ and $v \in \B_2(H)$, then the function $u \ot v$ on $G \times H$ defined by
\begin{equation*}
(u \ot v)(s,t)
\ov{\mathrm{def}}{=} u(s)v(t), \quad s \in G,t \in H,
\end{equation*}
is a Herz--Schur multiplier and
\begin{equation}
\label{eq-product-B2}
\norm{u \ot v}_{\B_2(G \times H)}
\leq
\norm{u}_{\B_2(G)} \norm{v}_{\B_2(H)}.
\end{equation}
Indeed, this follows immediately from the Hilbert space factorization of Herz--Schur multipliers. See also \cite[Corollary 5.4.11 p.~185]{KaL18}. Moreover, if $u \in \C_0(G)$ and $v \in \C_0(H)$, then the function $u \ot v$ also belongs to the space $\C_0(G\times H)$.

\paragraph{Weak Haagerup property} The following weak* formulation is \cite[Proposition 4.2 p.~3477]{Knu16}. It will be used in an essential way.

\begin{prop}
\label{prop-weakstar-WH}
Let $G$ be a locally compact group. If $C > 0$ then $\Lambda_{\WH}(G) \leq C$ if and only if there exists a net $(u_\alpha)_{\alpha \in A}$ of functions in the space $\B_2(G) \cap \C_0(G)$ such that
\begin{equation*}
\norm{u_\alpha}_{\B_2(G)}
\leq C,
\quad \alpha\in A,
\end{equation*}
and $u_\alpha \to 1_G$ in the topology $\sigma(\B_2(G),\Q(G))$.
\end{prop}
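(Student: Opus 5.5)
The proof has two directions. The forward one is an approximation and rescaling argument. The backward one is a smoothing by convolution, which upgrades weak$^*$ convergence to uniform convergence on compacts without increasing the Herz--Schur norm.

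\emph{Only if.} Assume $\Lambda_{\WH}(G)\leq C$. For each $\epsi>0$ the definition of $\Lambda_{\WH}(G)$ gives a net $(u^\epsi_\beta)$ in $\B_2(G)\cap\C_0(G)$ with $\norm{u^\epsi_\beta}_{\B_2(G)}\leq C+\epsi$ and $u^\epsi_\beta\to 1_G$ uniformly on compacts.

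First I show $u^\epsi_\beta\to 1_G$ weak$^*$. For $f\in \C_c(G)$ we have
\[
\left|\int_G f(u^\epsi_\beta-1)\d\mu_G\right|\leq \norm{f}_{\L^1(G)}\sup_{\supp f}|u^\epsi_\beta-1|\to 0 .
\]
The net is bounded in $\B_2(G)$, hence in $\L^\infty(G)$ by \eqref{eq-uniform-vs-B2}. Since $\C_c(G)$ is dense in $\Q(G)$, a $3\epsi$-argument gives convergence against every $q\in\Q(G)$.

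Next I rescale to get the sharp bound $C$. Set $v_{\epsi,\beta}=\frac{C}{C+\epsi}u^\epsi_\beta$, which has norm at most $C$. I index these functions by pairs $(\epsi,\beta)$, ordered so that $\epsi\to 0$ and $\beta$ runs through the net attached to $\epsi$; formally one uses the product directed set with $\epsi$ in $(0,1]$ ordered decreasingly. Then $v_{\epsi,\beta}\to 1_G$ weak$^*$, since $\frac{C}{C+\epsi}\to 1$ and the weak$^*$ convergence of each inner net is uniform against a fixed $q$ up to the bounded factor.

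\emph{If.} Let $(u_\alpha)$ be a net in $\B_2(G)\cap\C_0(G)$ with $\norm{u_\alpha}_{\B_2(G)}\leq C$ and $u_\alpha\to 1_G$ weak$^*$. Fix $f\in\C_c(G)$ with $f\geq 0$ and $\int f\d\mu_G=1$, and set
\[
w_\alpha(x)=\int_G f(y)u_\alpha(y^{-1}x)\d\mu_G(y).
\]

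I claim $w_\alpha\in\B_2(G)\cap\C_0(G)$ with $\norm{w_\alpha}_{\B_2(G)}\leq C$. Left translation is an isometry of $\B_2(G)$: the kernel $u(y^{-1}t^{-1}s)$ is obtained from $u(t^{-1}s)$ by relabelling the index $t\mapsto ty$. So by \eqref{eq-finite-B2}, each finite matrix $[w_\alpha(s_j^{-1}s_i)]$ is an average of Schur multipliers of norm at most $C$, against the probability density $f$. Hence $\norm{w_\alpha}_{\B_2(G)}\leq C$. Moreover $w_\alpha\in\C_0(G)$, because $f$ has compact support and $u_\alpha\in\C_0(G)$.

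It remains to show that $w_\alpha\to 1_G$ uniformly on compact sets. Substituting $z=y^{-1}x$ gives
\[
w_\alpha(x)=\la f_x,u_\alpha\ra_{\Q(G),\B_2(G)},\qquad f_x(z)=f(xz^{-1})\Delta(z^{-1}),
\]
where $f_x\in\C_c(G)$ and $\int f_x\d\mu_G=1$. The map $x\mapsto f_x$ is continuous from $G$ into $\L^1(G)$, by uniform continuity of $f$ and local boundedness of $\Delta$. Hence, by \eqref{eq-Q-L1}, it is also continuous into $\Q(G)$, so for a compact $K\subset G$ the set $\{f_x : x\in K\}$ is norm-compact in $\Q(G)$.

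A bounded weak$^*$-convergent net converges uniformly on norm-compact subsets of the predual: cover the set by finitely many $\delta$-balls and use the bound $C+1$. Therefore $\sup_{x\in K}|w_\alpha(x)-1|\to 0$, and so $\Lambda_{\WH}(G)\leq C$.

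The main obstacle I expect is this last step: verifying the continuity of $x\mapsto f_x$, keeping track of the modular function, and justifying the norm estimate for the vector-valued average.
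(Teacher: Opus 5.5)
Your proof is correct. The paper contains no argument of its own for this statement: it quotes it from Knudby \cite[Proposition 4.2]{Knu16} and uses it as a black box in Proposition~\ref{prop-polar}. So your text replaces a citation rather than competing with an in-paper proof, and it follows the standard route. For ``only if'' you use a uniform bound together with density of $\C_c(G)$ in $\Q(G)$. For ``if'' you smooth $u_\alpha\mapsto f*u_\alpha$ with a probability density $f\in\C_c(G)$. This keeps the bound $C$ and membership in $\C_0(G)$. It also upgrades weak$^*$ convergence to uniform convergence on compacts, because $x\mapsto f_x$ is norm continuous into $\Q(G)$ and a bounded weak$^*$-convergent net converges uniformly on norm-compact subsets of the predual.

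Compared with the bare citation, your version explicitly handles the passage from ``$\Lambda_{\WH}(G)\leq C$'' (a statement about an infimum) to a net with bound exactly $C$. Combined with your ``if'' direction, it also shows that the infimum defining $\Lambda_{\WH}(G)$ is attained by a net converging uniformly on compacts.

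Small tidy-ups:
\begin{itemize}
\item In the $3\epsi$-argument, the bound that matters is $\norm{u^\epsi_\beta-1_G}_{\B_2(G)}\leq C+\epsi+1$, via $\Q(G)^*=\B_2(G)$; the $\L^\infty$ bound plays no role.
\item The diagonal net can be avoided. The weak$^*$ closure of $\{v\in\B_2(G)\cap\C_0(G):\norm{v}_{\B_2(G)}\leq C\}$ contains each $\frac{C}{C+\epsi}1_G$, hence also their norm limit $1_G$.
\item The relabelling $t\mapsto ty$ is not measure preserving. It is, however, implemented by the unitary $\xi\mapsto\Delta(y)^{1/2}\xi(\cdot\,y)$ on $\L^2(G)$, so translation invariance of the Herz--Schur norm holds as you claim.
\end{itemize}
The obstacles you anticipate are harmless. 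First, $\norm{f_x-f_{x'}}_{\L^1(G)}=\int_G|f(xv)-f(x'v)|\d\mu_G(v)$, so continuity of $x\mapsto f_x$ is just continuity of left translation in $\L^1(G)$. Second, the average defining the Schur multiplier of $[w_\alpha(s_j^{-1}s_i)]$ lives in a finite-dimensional space, so it suffices to test against unit vectors.
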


If $H$ is a closed subgroup of $G$, then according to \cite[Proposition 5.2 p.~3479]{Knu16} the restriction map gives a contraction from the space $\B_2(G)$ into $\B_2(H)$ and maps the space $\C_0(G)$ into $\C_0(H)$ and we have
\begin{equation}
\label{eq-subgroup-WH}
\Lambda_{\WH}(H)
\leq \Lambda_{\WH}(G).
\end{equation}

\paragraph{Polars} Let $X,Y$ be two complex vector spaces in duality. If $A$ is a subset of $X$ we define as in \cite[Definition 1, II.64]{Bou87} the polar
\[
A^\circ
\ov{\mathrm{def}}{=} \{y \in Y : \operatorname{Re}(\la x,y \ra_{X,Y}) \geq -1 \text{ for all } x \in A \}.
\]
This is convex, being an intersection of the convex sets $\{\varphi \in X^*:\operatorname{Re}(\la x,y \ra_{X,Y})\geq -1\}$, for all $x \in A$. It also contains 0. In our applications usually $A$ has the property that $\alpha x \in A$ whenever $|\alpha| \leq 1$ and $x \in A$, and in this case it is easy to see \cite[II.64]{Bou87} that
\begin{equation}
\label{polar-prime}
A^\circ
=\{ y \in Y : |\la x,y \ra_{X,Y}| \leq 1 \text{ for all } x \in A \}.
\end{equation}
The bipolar of $A$ is $A^{\circ\circ}$. We have the following result, see \cite[Theorem 1.5 p.~126]{ScW99} and \cite[Theorem 1 II.44]{Bou87}.

\begin{thm}[bipolar theorem]
\label{th-bipolar}
Let $X,Y$ be two complex vector spaces in duality. If $A$ is a subset of $X$ then the bipolar $A^{\circ\circ}$ of $A$ is the closed convex hull (for $\sigma(X,Y)$) of the set $A \cup \{0\}$.
\end{thm}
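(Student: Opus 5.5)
The plan is to prove the two inclusions between the bipolar $A^{\circ\circ}$ and the $\sigma(X,Y)$-closed convex hull $K$ of $A \cup \{0\}$. We work in the locally convex space $(X,\sigma(X,Y))$ and use the Hahn--Banach separation theorem, together with the standard fact that the continuous dual of $(X,\sigma(X,Y))$ is exactly $Y$, acting through the pairing $\la \cdot,\cdot \ra_{X,Y}$.

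\emph{Inclusion $K \subset A^{\circ\circ}$.} By definition,
\[
A^{\circ\circ} = \bigcap_{y \in A^\circ} \{x \in X : \operatorname{Re}\la x,y\ra_{X,Y} \geq -1\}.
\]
Each set $\{x : \operatorname{Re}\la x,y\ra_{X,Y} \geq -1\}$ is a closed half-space for $\sigma(X,Y)$, because $x \mapsto \operatorname{Re}\la x,y\ra_{X,Y}$ is $\sigma(X,Y)$-continuous. Hence $A^{\circ\circ}$ is convex and $\sigma(X,Y)$-closed. It contains $0$ trivially. It contains $A$ because for $x \in A$ and $y \in A^\circ$ we have $\operatorname{Re}\la x,y\ra_{X,Y} \geq -1$ by the definition of $A^\circ$. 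Since $K$ is the smallest closed convex set containing $A \cup \{0\}$, we get $K \subset A^{\circ\circ}$.

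\emph{Inclusion $A^{\circ\circ} \subset K$.} Let $x_0 \notin K$. The set $K$ is closed and convex, and $\{x_0\}$ is compact and convex, in the locally convex space $(X,\sigma(X,Y))$. The geometric Hahn--Banach theorem gives a continuous linear functional $\varphi$ and reals $\alpha < \beta$ such that
\[
\operatorname{Re}\varphi(x_0) < \alpha < \beta \leq \operatorname{Re}\varphi(x) \quad \text{for all } x \in K.
\]
The key step is to represent $\varphi$ as $\varphi = \la \cdot, y\ra_{X,Y}$ for some $y \in Y$. For this, continuity for $\sigma(X,Y)$ gives finitely many $y_1,\ldots,y_m \in Y$ with $|\varphi(x)| \leq \max_k |\la x,y_k\ra_{X,Y}|$. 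Thus $\ker \varphi$ contains $\bigcap_k \ker \la \cdot,y_k\ra_{X,Y}$, and a linear algebra lemma shows that $\varphi$ is a linear combination of the $\la \cdot,y_k\ra_{X,Y}$.

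Since $0 \in K$, we have $\alpha < \beta \leq \operatorname{Re}\varphi(0) = 0$, so $\alpha < 0$. Put $y' = y/|\alpha|$. Then $\operatorname{Re}\la x,y'\ra_{X,Y} > -1$ for every $x \in K \supset A$, so $y' \in A^\circ$. On the other hand, $\operatorname{Re}\la x_0,y'\ra_{X,Y} < -1$, so $x_0 \notin A^{\circ\circ}$. This proves $A^{\circ\circ} \subset K$.

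The only nontrivial ingredient is the identification of the weak dual with $Y$ via the finite-intersection-of-kernels lemma. The separation step is standard, and the normalization uses $0 \in K$ to ensure that $\alpha$ is strictly negative.
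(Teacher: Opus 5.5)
Your proof is correct: the easy inclusion via closed half-spaces, the reverse inclusion via Hahn--Banach separation in $(X,\sigma(X,Y))$, the identification of continuous functionals with elements of $Y$ through the finite-intersection-of-kernels lemma, and the normalization using $0 \in K$ to force $\alpha<0$ together form the standard argument. The paper gives no proof of its own and simply cites Schaefer--Wolff and Bourbaki, whose proofs follow these same lines.
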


\section{A polar description of the weak Haagerup constant}
\label{sec:polar}

We equip the space $\B_2(G) \cap \C_0(G)$ with the norm inherited from the Banach space $\B_2(G)$. Note that the subspace $\B_2(G) \cap \C_0(G)$ is closed in $\B_2(G)$. Indeed, convergence in $\B_2(G)$ implies uniform convergence by \eqref{eq-uniform-vs-B2}, and $\C_0(G)$ is closed for the uniform norm.


For any function $f \in \L^1(G)$, we define
\begin{equation}
\label{eq-def-pG}
p_G(f)
\ov{\mathrm{def}}{=}
\sup\left\{
\left|\int_G f(s)u(s)\d\mu_G(s)\right| :
u \in \B_2(G) \cap \C_0(G),
\norm{u}_{\B_2(G)} \leq 1
\right\}.
\end{equation}
We first extend the seminorm $p_G$ from $\L^1(G)$ to $\Q(G)$. For any functions $f,g \in \L^1(G)$, we have
\begin{equation*}
|p_G(f)-p_G(g)|
\leq
p_G(f-g)
\ov{\eqref{eq-def-pG} \eqref{eq-Q-norm}}{\leq} 
\norm{f-g}_{\Q(G)}.
\end{equation*}
Consequently, since the space $\Q(G)$ is the completion of $\L^1(G)$ for the norm $\norm{\cdot}_{\Q(G)}$, the seminorm $p_G$ admits a unique continuous
extension to $\Q(G)$, which we still denote by $p_G$. This extension satisfies
\begin{equation}
\label{eq-pG-Q-extension}
|p_G(q)-p_G(q')|
\leq
\norm{q-q'}_{\Q(G)},
\quad q,q'\in\Q(G).
\end{equation}

We record a description of this extension which will be useful below. 

\begin{lemma}
For any $q \in \Q(G)$, we have
\begin{equation}
\label{eq-pG-Q-abstract}
p_G(q)
=\sup\left\{
|\la q,u \ra_{\Q(G),\B_2(G)}| : u \in \B_2(G) \cap \C_0(G), \norm{u}_{\B_2(G)}\leq 1 \right\}.
\end{equation}
\end{lemma}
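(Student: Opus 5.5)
Denote by $\tilde p_G(q)$ the right-hand side of \eqref{eq-pG-Q-abstract}. The plan is to show that $\tilde p_G$ is a continuous seminorm on $\Q(G)$ that agrees with $p_G$ on $\L^1(G)$. Uniqueness of the continuous extension then gives $\tilde p_G = p_G$ on all of $\Q(G)$.

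First, for $f \in \L^1(G)$ the duality \eqref{eq-duality-Q-B2} gives $\la f,u\ra_{\Q(G),\B_2(G)} = \int_G f(s)u(s)\d\mu_G(s)$ for every $u \in \B_2(G)$. Comparing with the definition \eqref{eq-def-pG}, we get $\tilde p_G(f) = p_G(f)$.

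Second, I would prove the Lipschitz estimate
\[
|\tilde p_G(q)-\tilde p_G(q')| \leq \norm{q-q'}_{\Q(G)}, \qquad q,q' \in \Q(G).
\]
For any $u$ in the unit ball of $\B_2(G)$, using $\Q(G)^*=\B_2(G)$ isometrically, we have
\[
|\la q,u\ra| \leq |\la q',u\ra| + |\la q-q',u\ra| \leq \tilde p_G(q') + \norm{q-q'}_{\Q(G)}\norm{u}_{\B_2(G)}.
\]
Taking the supremum over such $u$ and then swapping the roles of $q$ and $q'$ gives the estimate. In particular $\tilde p_G$ is continuous on $\Q(G)$; it is also finite, since $\tilde p_G(q) \leq \norm{q}_{\Q(G)}$.

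Finally, $\L^1(G)$ is dense in $\Q(G)$ by construction of $\Q(G)$ as a completion. So $\tilde p_G$ and the extension $p_G$ are two continuous functions on $\Q(G)$ that coincide on a dense subset, and hence they are equal. Concretely: take $f_n \in \L^1(G)$ with $f_n \to q$ in $\Q(G)$. Then \eqref{eq-pG-Q-extension} gives $p_G(f_n)\to p_G(q)$, the Lipschitz estimate above gives $\tilde p_G(f_n)\to \tilde p_G(q)$, and the two sequences are equal by the first step.

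I do not expect a genuine obstacle here. The only point needing care is that the pairing $\la q,u\ra$ for $q \in \Q(G)$ is defined through the isometric identification $\Q(G)^* = \B_2(G)$. This identification guarantees both that the bound $|\la q-q',u\ra| \leq \norm{q-q'}_{\Q(G)}\norm{u}_{\B_2(G)}$ holds and that the pairing is consistent with the integral formula on $\L^1(G)$.
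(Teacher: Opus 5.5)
Your proof is correct and is essentially the paper's argument. Both approximate $q$ by $f_n\in\L^1(G)$ in $\Q(G)$-norm and use the $1$-Lipschitz bound coming from the isometric duality $\Q(G)^*=\B_2(G)$; the paper compares $\tilde p_G(q)$ directly with $p_G(f_n)$, whereas you first prove the Lipschitz estimate for $\tilde p_G$ on all of $\Q(G)$.

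One small wording fix: in your Lipschitz step, $u$ should range over the unit ball of $\B_2(G)\cap\C_0(G)$, not all of $\B_2(G)$. The bound $|\la q',u\ra|\leq \tilde p_G(q')$ only holds for such $u$.
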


\begin{proof}
Let $(f_n)$ be a sequence of functions in $\L^1(G)$ such that $\norm{f_n-q}_{\Q(G)} \to 0$. Since $\Q(G)^* = \B_2(G)$ isometrically, we have
\begin{align*}
\MoveEqLeft
\left|
\sup_{\substack{u \in \B_2(G) \cap \C_0(G)\\ \norm{u}_{\B_2(G)}\leq1}}
|\la q,u\ra|
-
p_G(f_n)
\right| 
\ov{\eqref{eq-duality-Q-B2} \eqref{eq-def-pG}}{\leq} 
\sup_{\substack{u \in \B_2(G) \cap \C_0(G)\\ \norm{u}_{\B_2(G)}\leq1}}
\big| |\la q,u\ra| - |\la f_n,u \ra_{\Q(G),\B_2(G)}| \big| \\
&\leq \sup_{\substack{u \in \B_2(G) \cap \C_0(G)\\ \norm{u}_{\B_2(G)} \leq 1}}
|\la q,u\ra_{\Q(G),\B_2(G)}-\la f_n,u \ra_{\Q(G),\B_2(G)}| \\
&= \sup_{\substack{u \in \B_2(G) \cap \C_0(G)\\ \norm{u}_{\B_2(G)} \leq 1}}
|\la q-f_n,u \ra_{\Q(G),\B_2(G)}|
\leq
\norm{q-f_n}_{\Q(G)}.
\end{align*}
Here, in the first inequality we used the elementary estimate
\begin{equation*}
\left|\sup_{i\in I} a_i-\sup_{i\in I}b_i\right|
\leq
\sup_{i\in I}|a_i-b_i|,
\end{equation*}
whereas the second inequality follows from
$\big||z|-|w|\big|\leq|z-w|$. The right-hand side converges to $0$, whereas $p_G(f_n)\to p_G(q)$ by \eqref{eq-pG-Q-extension}. This proves
\eqref{eq-pG-Q-abstract}.
\end{proof}


Since $1_G \in \B_2(G)$ and $\norm{1_G}_{\B_2(G)}=1$, the linear
functional
\begin{equation*}
f \mapsto \int_G f(s)\d\mu_G(s),
\quad f\in\L^1(G),
\end{equation*}
is continuous for the $\Q(G)$-norm. Hence it admits a unique continuous extension to $\Q(G)$. Under the duality $\Q(G)^*=\B_2(G)$, this extension is the functional $\Q(G) \to \mathbb{C}$, $q \mapsto \la q,1_G\ra_{\Q(G),\B_2(G)}$.

The following proposition is the first ingredient of the proof of Theorem \ref{thm-main}.

\begin{prop}
\label{prop-polar}
Let $G$ be a locally compact group. Then
\begin{equation}
\label{eq-polar}
\Lambda_{\WH}(G)
=
\sup\left\{
\left|\int_G f(s) \d\mu_G(s)\right| : f\in \C_c(G), p_G(f)\leq 1
\right\}.
\end{equation}
The supremum in \eqref{eq-polar} is allowed to be infinite.
\end{prop}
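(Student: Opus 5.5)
Write $\sigma$ for the right-hand side of \eqref{eq-polar}, and let $K_C$ be the set of $u \in \B_2(G) \cap \C_0(G)$ with $\norm{u}_{\B_2(G)} \leq C$. By Proposition \ref{prop-weakstar-WH}, $\Lambda_{\WH}(G) \leq C$ if and only if $1_G$ lies in the $\sigma(\B_2(G),\Q(G))$-closure of $K_C$. The plan is to describe this closure with the bipolar theorem (Theorem \ref{th-bipolar}) for the dual pair $(\B_2(G),\Q(G))$, and then to replace $\Q(G)$ by $\C_c(G)$ using density together with the continuity estimate \eqref{eq-pG-Q-extension}.

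\emph{Inequality $\sigma \leq \Lambda_{\WH}(G)$.} We may assume $\Lambda_{\WH}(G) < \infty$. Take $C > \Lambda_{\WH}(G)$ and a net $(u_\alpha)$ in $K_C$ converging weak* to $1_G$, as given by Proposition \ref{prop-weakstar-WH}. Let $f \in \C_c(G)$ with $p_G(f) \leq 1$. By the definition \eqref{eq-def-pG}, each $u_\alpha$ satisfies $|\int_G f u_\alpha \d\mu_G| \leq C\,p_G(f) \leq C$. Since $f \in \L^1(G) \subset \Q(G)$, passing to the limit gives $|\int_G f \d\mu_G| \leq C$. Letting $C \downarrow \Lambda_{\WH}(G)$ yields $\sigma \leq \Lambda_{\WH}(G)$.

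\emph{Inequality $\Lambda_{\WH}(G) \leq \sigma$.} We may assume $\sigma < \infty$; if $\sigma = 0$, we work with any $C > 0$ instead. Fix $C > \sigma$. The set $K_C$ is convex, balanced and contains $0$, so Theorem \ref{th-bipolar} shows that its weak* closure is $K_C^{\circ\circ}$. By \eqref{polar-prime} and \eqref{eq-pG-Q-abstract} (the latter applied after scaling by $C$), the polar is
\[
K_C^\circ = \{ q \in \Q(G) : C\,p_G(q) \leq 1 \}.
\]
It therefore suffices to show that $|\la q,1_G\ra| \leq 1$ whenever $C\,p_G(q) \leq 1$.

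This is the main step. Fix such a $q$. We first claim that $|\la f,1_G\ra| \leq \sigma\, p_G(f)$ for every $f \in \C_c(G)$. When $p_G(f) > 0$, this follows by rescaling $f$ to have $p_G(f) = 1$. When $p_G(f) = 0$, every multiple $tf$ satisfies $p_G(tf) \leq 1$, so $|t|\,|\int_G f \d\mu_G| \leq \sigma$ for all $t$; since $\sigma < \infty$, this forces $\int_G f \d\mu_G = 0$.

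Next, $\C_c(G)$ is dense in $\Q(G)$, so we can choose $f_n \in \C_c(G)$ with $f_n \to q$ in $\Q(G)$. The functional $\la \cdot,1_G\ra$ is continuous on $\Q(G)$, and $p_G$ is continuous by \eqref{eq-pG-Q-extension}. Passing to the limit in the claim gives
\[
|\la q,1_G\ra| \leq \sigma\, p_G(q) \leq \sigma/C < 1.
\]
Hence $1_G \in K_C^{\circ\circ}$, so $1_G$ lies in the weak* closure of $K_C$. Proposition \ref{prop-weakstar-WH} then gives $\Lambda_{\WH}(G) \leq C$. Letting $C \downarrow \sigma$ completes the argument.

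The delicate points are these. The bipolar theorem requires $K_C$ to be convex and balanced, which is clear. The polar must be computed using the extended seminorm from \eqref{eq-pG-Q-abstract}. Finally, weak* closure points must be converted into nets: the closure of a set consists exactly of limits of nets from that set, which is what Proposition \ref{prop-weakstar-WH} needs.
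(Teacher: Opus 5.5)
Your proof is correct and follows the paper's route: the bipolar theorem for the dual pair $(\B_2(G),\Q(G))$, the polar $K_C^\circ=\{q : C\,p_G(q)\leq 1\}$ computed with the extended seminorm, and density of $\C_c(G)$ in $\Q(G)$ together with \eqref{eq-pG-Q-extension}. The only difference is that you obtain the easy inequality (right-hand side of \eqref{eq-polar} $\leq \Lambda_{\WH}(G)$) directly, by passing to the limit along the approximating net, whereas the paper derives both inequalities from a single chain of equivalences valid for every $C>0$.
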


\begin{proof}
Let $\mathrm{Ball}_{\B_2(G) \cap \C_0(G)}$ denote the closed unit ball of the space $\B_2(G) \cap \C_0(G)$ equipped with the norm $\norm{\cdot}_{\B_2(G)}$. We put
\begin{equation*}
K_C
\ov{\mathrm{def}}{=}
C\mathrm{Ball}_{\B_2(G)\cap\C_0(G)}.
\end{equation*}
According to Proposition \ref{prop-weakstar-WH}, for any constant $C > 0$, we have
\begin{equation}
\label{eq-closure-polar}
\Lambda_{\WH}(G)\leq C
\quad\Longleftrightarrow\quad
1_G\in
\overline{K_C}^{\sigma(\B_2(G),\Q(G))}.
\end{equation}
We equip $\B_2(G)$ with the topology $\sigma(\B_2(G),\Q(G))$. Since $\Q(G)$ is a predual of $\B_2(G)$, the continuous dual of $\big(\B_2(G),\sigma(\B_2(G),\Q(G))\big)$ is precisely $\Q(G)$. 
The set $K_C$ is absolutely convex and contains $0$. Hence, by the bipolar theorem (Theorem \ref{th-bipolar}), we have
\begin{equation}
\label{eq-bipolar-KC}
K_C^{\circ \circ}
=\overline{K_C}^{\sigma(\B_2(G),\Q(G))}.
\end{equation}
Now, we compute the polar of the subset $K_C$. By \eqref{eq-pG-Q-abstract}, for any $q \in \Q(G)$, we have
\begin{align*}
q \in K_C^\circ
\ov{\eqref{polar-prime}}{\Longleftrightarrow} 
\sup_{u \in K_C}|\la q,u\ra|
\leq 1
\Longleftrightarrow
C
\sup_{\substack{u \in \B_2(G) \cap \C_0(G)\\
\norm{u}_{\B_2(G)} \leq 1}}
|\la q,u\ra|
\leq1
\ov{\eqref{eq-pG-Q-abstract}}{\Longleftrightarrow} Cp_G(q) \leq 1.
\end{align*}
Thus $K_C^\circ = \{q \in \Q(G) : Cp_G(q) \leq 1 \}$. It follows from this equality and from \eqref{eq-bipolar-KC} that
\begin{align}
1_G \in \overline{K_C}^{\sigma(\B_2(G),\Q(G))}
\quad\ov{\eqref{eq-bipolar-KC}\eqref{polar-prime}}{\Longleftrightarrow} \quad
|\la q,1_G\ra|
\leq 1
\quad
\text{for any }q\in\Q(G)
\text{ satisfying }Cp_G(q)
\leq1.
\label{eq-bipolar-condition}
\end{align}
We claim that the condition on the right-hand side of \eqref{eq-bipolar-condition} is equivalent to
\begin{equation}
\label{eq-polar-inequality-Q}
|\la q,1_G\ra|
\leq
Cp_G(q),
\quad q\in\Q(G).
\end{equation}
Indeed, suppose first that the condition in \eqref{eq-bipolar-condition} holds and let $q\in\Q(G)$. If $p_G(q)>0$, then
\begin{equation*}
p_G\left(\frac{q}{Cp_G(q)}\right)
=
\frac{1}{C},
\end{equation*}
and hence \eqref{eq-bipolar-condition} gives
\begin{equation*}
\frac{|\la q,1_G\ra|}{Cp_G(q)}
\leq1.
\end{equation*}
This proves \eqref{eq-polar-inequality-Q} in this case.

Now, suppose that $p_G(q)=0$. For any $t > 0$, we have $Cp_G(tq)=0\leq1$, and hence \eqref{eq-bipolar-condition} gives
\begin{equation*}
t|\la q,1_G\ra|
=
|\la tq,1_G\ra|
\leq1.
\end{equation*}
Letting $t$ tend to infinity, we obtain $\la q,1_G\ra=0$. Thus \eqref{eq-polar-inequality-Q} also holds when $p_G(q)=0$. The converse implication follows immediately from \eqref{eq-polar-inequality-Q}.

Combining \eqref{eq-closure-polar} and \eqref{eq-bipolar-condition}, we
therefore obtain
\begin{equation}
\label{eq-polar-C-Q}
\Lambda_{\WH}(G)\leq C
\quad\Longleftrightarrow\quad
|\la q,1_G\ra|
\leq Cp_G(q)
\quad \text{for any }q \in \Q(G).
\end{equation}
We next show that the inequality on the right-hand side of \eqref{eq-polar-C-Q} may be tested on the dense subspace $\C_c(G)$. Indeed, the map $\Q(G) \to \mathbb{C}$, $q \mapsto \la q,1_G\ra$ is continuous, since $\norm{1_G}_{\B_2(G)}=1$, whereas the map $q \mapsto p_G(q)$ is continuous by \eqref{eq-pG-Q-extension}. Consequently, since the subspace $\C_c(G)$ is dense in the Banach space $\Q(G)$, the condition \eqref{eq-polar-inequality-Q} is equivalent to
\begin{equation}
\label{eq-polar-inequality-Cc}
|\la f,1_G\ra|
\leq
Cp_G(f),
\quad f \in \C_c(G).
\end{equation}
More explicitly, if \eqref{eq-polar-inequality-Cc} holds, let $q \in \Q(G)$ and choose a sequence $(f_n)$ of functions in $\C_c(G)$ such that $\norm{f_n-q}_{\Q(G)} \to 0$. Then $
\la f_n,1_G\ra
\to
\la q,1_G\ra$ and, by \eqref{eq-pG-Q-extension},
\begin{equation*}
p_G(f_n) \to p_G(q).
\end{equation*}
Passing to the limit in
\begin{equation*}
|\la f_n,1_G\ra|
\ov{\eqref{eq-polar-inequality-Cc}}{\leq}  Cp_G(f_n)
\end{equation*}
gives \eqref{eq-polar-inequality-Q}. The converse implication follows by restriction to $\C_c(G)$. For any function $f \in \C_c(G)$, the duality formula \eqref{eq-duality-Q-B2} gives
\begin{equation*}
\la f,1_G\ra
\ov{\eqref{eq-duality-Q-B2}}{=} 
\int_G f(s) \d\mu_G(s).
\end{equation*}
Hence \eqref{eq-polar-C-Q} is equivalent to
\begin{equation}
\label{eq-polar-C-Cc}
\Lambda_{\WH}(G)\leq C
\quad\Longleftrightarrow\quad
\left|\int_G f(s)\d\mu_G(s)\right|
\leq
Cp_G(f)\quad \text{for any } f \in \C_c(G).
\end{equation}
Let
\begin{equation*}
D
\ov{\mathrm{def}}{=}
\sup\left\{
\left|\int_G f(s)\d\mu_G(s)\right|:
f\in\C_c(G), p_G(f)\leq1
\right\}.
\end{equation*}
We finally observe that, for any constant $C > 0$, the condition on the right-hand side of \eqref{eq-polar-C-Cc} is equivalent to $D \leq C$. One implication is immediate. Conversely, suppose that $D \leq C$ and let $f \in \C_c(G)$. If $p_G(f) > 0$, applying the definition of $D$ to $\frac{1}{p_G(f)}f$ gives
\begin{equation*}
\left|\int_G f(s) \d\mu_G(s)\right|
\leq
D p_G(f)
\leq C p_G(f).
\end{equation*}
If $p_G(f)=0$, then $p_G(tf) = 0 \leq 1$ for any $t > 0$. Hence
\begin{equation*}
t\left|\int_G f(s) \d\mu_G(s)\right|
= \left|\int_G tf(s) \d\mu_G(s)\right|
\leq D
\end{equation*}
for any $t>0$. Letting $t$ tend to infinity, we obtain
\begin{equation*}
\int_G f(s) \d\mu_G(s)
=0.
\end{equation*}
Thus the right-hand side of \eqref{eq-polar-C-Cc} holds. We have therefore proved that, for any $C>0$,
\begin{equation*}
\Lambda_{\WH}(G)
\leq C \quad\Longleftrightarrow\quad
D\leq C.
\end{equation*}
It follows that $\Lambda_{\WH}(G)=D$, with the equality understood in
$[0,\infty]$. This proves \eqref{eq-polar}.
\end{proof}

\section{Slice maps of Herz--Schur multipliers}
\label{sec:slices}

We first prove a finite matrix lemma. It is the main technical point of the paper.

\begin{lemma}
\label{lemma-matrix-slice}
Let $G$ and $H$ be locally compact groups and let $w \in \B_2(G \times H)$. Fix $n \geq 1$, $h_1,\ldots,h_n \in H$, $A=[a_{ij}] \in \M_n$ and $\xi=(\xi_j)_{j=1}^n$, $\eta=(\eta_i)_{i=1}^n \in \Cbb^n$. Define the function $u \co G \to \Cbb$ by
\begin{equation}
\label{eq-def-u-slice}
u(s)
\ov{\mathrm{def}}{=}
\sum_{i,j=1}^n
\overline{\eta_i}a_{ij}\xi_j
w(s,h_j^{-1}h_i),
\quad s \in G.
\end{equation}
Then $u\in \mathrm{B}_2(G)$ and
\begin{equation}
\label{eq-u-slice-bound}
\norm{u}_{\B_2(G)}
\leq
\norm{w}_{\B_2(G \times H)} \norm{A}\norm{\xi}_{\ell^2_n} \norm{\eta}_{\ell^2_n}.
\end{equation}
If, in addition, $w\in \C_0(G \times H)$, then $u \in \C_0(G)$.
\end{lemma}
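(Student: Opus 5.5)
We will use the finite matrix characterization \eqref{eq-finite-B2} on both $G$ and $G\times H$. Fix $m\geq 1$, points $s_1,\ldots,s_m\in G$ and a matrix $X=[x_{kl}]\in\M_m$ with $\norm{X}\leq 1$. We must show
\[
\norm{[u(s_l^{-1}s_k)x_{kl}]_{k,l=1}^m}_{\M_m}\leq \norm{w}_{\B_2(G\times H)}\norm{A}\norm{\xi}_{\ell^2_n}\norm{\eta}_{\ell^2_n}.
\]
Continuity of $u$ is clear, since $u$ is a finite linear combination of the continuous functions $s\mapsto w(s,h_j^{-1}h_i)$. Likewise, if $w\in\C_0(G\times H)$, each function $s\mapsto w(s,h)$ lies in $\C_0(G)$, so $u\in\C_0(G)$. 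The whole content is therefore the norm bound.

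The idea is to realize the matrix $[u(s_l^{-1}s_k)x_{kl}]$ as a compression of a Schur multiplier on $\M_{mn}\cong\M_m\ot\M_n$. Index $\{1,\ldots,m\}\times\{1,\ldots,n\}$ by pairs $(k,i)$ and use the points $(s_k,h_i)\in G\times H$. Then
\[
(s_l,h_j)^{-1}(s_k,h_i)=(s_l^{-1}s_k,h_j^{-1}h_i),
\]
so the kernel $w(s_l^{-1}s_k,h_j^{-1}h_i)$ is exactly the finite Herz--Schur matrix of $w$ attached to these $mn$ points. (If the pairs repeat, this is harmless, since \eqref{eq-finite-B2} allows arbitrary points.) By \eqref{eq-finite-B2} for $G\times H$, the Schur multiplier $M_w$ with symbol $[w(s_l^{-1}s_k,h_j^{-1}h_i)]_{(k,i),(l,j)}$ has norm at most $\norm{w}_{\B_2(G\times H)}$ on $\M_{mn}$. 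Apply it to $Y\ov{\mathrm{def}}{=}X\ot A$, that is $Y_{(k,i),(l,j)}=x_{kl}a_{ij}$, which satisfies $\norm{Y}\leq\norm{A}$. Next define isometries/contractions $V_\xi\co\Cbb^m\to\Cbb^{mn}$ by $e_l\mapsto e_l\ot\xi$, and similarly $V_\eta$. Then
\[
\big(V_\eta^*M_w(Y)V_\xi\big)_{kl}=\sum_{i,j}\overline{\eta_i}\,w(s_l^{-1}s_k,h_j^{-1}h_i)\,x_{kl}a_{ij}\,\xi_j=u(s_l^{-1}s_k)x_{kl},
\]
by \eqref{eq-def-u-slice}. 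Since $\norm{V_\xi}=\norm{\xi}_{\ell^2_n}$ and $\norm{V_\eta}=\norm{\eta}_{\ell^2_n}$, this gives the desired bound. Taking the supremum over $m$, the points $s_k$, and $X$, criterion \eqref{eq-finite-B2-criterion} and \eqref{eq-finite-B2} yield $u\in\B_2(G)$ together with \eqref{eq-u-slice-bound}.

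The main obstacle is only bookkeeping: getting the index convention right so that the compression produces exactly $\overline{\eta_i}a_{ij}\xi_j$ with $h_j^{-1}h_i$ in the correct order. The row index $(k,i)$ must be paired with $\eta$, and the column index $(l,j)$ with $\xi$. Matching the convention $u(s_j^{-1}s_i)$ in \eqref{eq-finite-B2} is what ensures this. No analytic difficulty arises, because everything is finite-dimensional.
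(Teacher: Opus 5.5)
Your proposal is correct and follows essentially the same route as the paper: apply the Schur multiplier $[w(s_l^{-1}s_k,h_j^{-1}h_i)]$ on $\M_{mn}$ to $B\ot A$ and compress by $V_\eta^*(\cdot)V_\xi$ to obtain $[b_{kl}u(s_l^{-1}s_k)]$, with continuity and the $\C_0$ part handled by noting each $w(\cdot,h)$ is in $\C_0(G)$. One small wording point: $V_\xi$ is an isometry only when $\norm{\xi}_{\ell^2_n}=1$, but you correctly use $\norm{V_\xi}=\norm{\xi}_{\ell^2_n}$, so nothing is affected.
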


\begin{proof}
Fix $m \geq 1$, $s_1,\ldots,s_m \in G$ and $B=[b_{kl}] \in \M_m$. Consider the matrix $K \in \M_{mn}$ indexed by pairs $(k,i)$ and $(l,j)$ and defined by
\begin{equation}
\label{eq-def-K}
K_{(k,i),(l,j)}
=
w(s_l^{-1}s_k,h_j^{-1}h_i), \quad 1 \leq k,l \leq m, 1 \leq i,j \leq n.
\end{equation}
The points $(s_k,h_i)$ belong to the group $G \times H$ and $(s_l,h_j)^{-1}(s_k,h_i)=(s_l^{-1} s_k,h_j^{-1}h_i)$. The norm of the Schur multiplier $M_K \co \M_{mn}(\Cbb) \to \M_{mn}(\Cbb)$ can be estimated by
\begin{equation}
\label{eq-K-Schur}
\norm{M_K}_{\M_{mn}(\Cbb) \to \M_{mn}(\Cbb)} 
\ov{\eqref{eq-finite-B2}}{\leq}
\norm{w}_{\B_2(G \times H)}.
\end{equation}
Let $V_\xi,V_\eta \co \Cbb^m \to \Cbb^m \ot \Cbb^n$ be the linear operators defined by
\begin{equation}
\label{def-V-xi}
V_\xi(z)
\ov{\mathrm{def}}{=} z \ot \xi
\quad \text{and} \quad
V_\eta(z)
\ov{\mathrm{def}}{=} z \ot \eta, \quad z \in \Cbb^m,
\end{equation}
where $\xi,\eta \in \Cbb^n$. Then $
\norm{V_\xi}_{\ell^2_n \to \ell^2_n \ot_2 \ell^2_n}
=\norm{\xi}_{\ell^2_n}$ and $\norm{V_\eta}_{\ell^2_n \to \ell^2_n \ot_2 \ell^2_n}
=\norm{\eta}_{\ell^2_n}$. 
We identify $\M_{mn}$ with the space $\B(\Cbb^m \ot \Cbb^n)$ with respect to the canonical orthonormal basis $(e_k \ot f_i)_{1 \leq k \leq m,1 \leq i \leq n}$, where $(e_k)_{k=1}^m$ and $(f_i)_{i=1}^n$ denote the canonical bases of the vector spaces $\Cbb^m$ and $\Cbb^n$, respectively. For any $1 \leq l \leq m$, observe that
\begin{equation}
\label{fin-35}
V_\xi e_l
 \ov{\eqref{def-V-xi}}{=} e_l \ot \xi
=\sum_{j=1}^n \xi_j e_l \ot f_j, \quad \xi \in \ell^2_n.
\end{equation}
Moreover, the matrix of $B\ot A$ with respect to the previous basis is
\[
(B \ot A)_{(k,i),(l,j)}
=b_{kl}a_{ij}.
\]
Hence, by the definition \eqref{eq-def-K} of the matrix $K$, the matrix of the Schur product $M_K(B\ot A)$ is given by
\begin{equation}
\label{fin-32}
\big[M_K(B \ot A)\big]_{(k,i),(l,j)}
=
K_{(k,i),(l,j)}b_{kl}a_{ij}.
\end{equation}
Consequently, for any $1 \leq l \leq m$, we have
\begin{align}
\label{fin-2}
M_K(B\ot A)V_\xi e_l
\ov{\eqref{fin-32}\eqref{fin-35}}{=} 
\sum_{k=1}^m\sum_{i,j=1}^n K_{(k,i),(l,j)}b_{kl}a_{ij}\xi_j e_k\ot f_i. 
\end{align}
On the other hand, the definition of $V_\eta$ gives
\begin{equation}
\label{fin-1}
V_\eta^*(e_k\ot f_i)
=\overline{\eta_i}e_k.
\end{equation}
It follows that
\begin{align*}
\MoveEqLeft
V_\eta^*M_K(B\ot A)V_\xi e_l
\ov{\eqref{fin-2}}{=}V_\eta^*\bigg(\sum_{k=1}^m\sum_{i,j=1}^n K_{(k,i),(l,j)}b_{kl}a_{ij}\xi_j e_k\ot f_i\bigg) \\
&\ov{\eqref{fin-1}}{=}
\sum_{k=1}^m
\bigg(
\sum_{i,j=1}^n
\overline{\eta_i}
K_{(k,i),(l,j)}
b_{kl}a_{ij}\xi_j
\bigg)e_k\\
&\ov{\eqref{eq-def-K}}{=}
\sum_{k=1}^m
b_{kl}
\bigg(
\sum_{i,j=1}^n
\overline{\eta_i}a_{ij}\xi_j
w(s_l^{-1}s_k,h_j^{-1}h_i)
\bigg)e_k
\ov{\eqref{eq-def-u-slice}}{=}
\sum_{k=1}^m b_{kl}u(s_l^{-1}s_k)e_k.
\end{align*}
Since this equality holds for any $1\leq l\leq m$, we obtain
\begin{equation}
\label{eq-compression-identity}
V_\eta^*M_K(B\ot A)V_\xi
=
[b_{kl}u(s_l^{-1}s_k)]_{k,l=1}^m.
\end{equation}
Consequently, since $\norm{B \ot A} = \norm{B}\norm{A}$, we have
\begin{align*}
\MoveEqLeft
\norm{[b_{kl}u(s_l^{-1}s_k)]_{k,l=1}^m}
\ov{\eqref{eq-compression-identity}}{=} \norm{V_\eta^*M_K(B\ot A)V_\xi}
\leq
\norm{V_\eta} \norm{M_K} \norm{B \ot A} \norm{V_\xi} \\
&\ov{\eqref{eq-K-Schur}}{\leq}
\norm{w}_{\B_2(G \times H)}\norm{B} \norm{A}\norm{\xi}_{\ell^2_n}\norm{\eta}_{\ell^2_n}.
\end{align*}
Since the matrix $B \in \M_m(\mathbb{C})$ is arbitrary, we deduce that
\begin{equation*}
\norm{M_{[u(s_l^{-1}s_k)]_{k,l=1}^m}}_{\M_m \to \M_m}
\leq
\norm{w}_{\B_2(G \times H)} \norm{A} \norm{\xi}_{\ell^2_n} \norm{\eta}_{\ell^2_n}.
\end{equation*}
Since $w$ is continuous, formula \eqref{eq-def-u-slice} shows that $u \co G \to \Cbb$ is continuous. Since $m$, $s_1,\ldots,s_m$ and $B \in \M_m$ are arbitrary, the finite matrix characterization \eqref{eq-finite-B2-criterion} of Herz--Schur multipliers now implies that $u \in \B_2(G)$ and gives \eqref{eq-u-slice-bound}.

Suppose finally that $w \in \C_0(G\times H)$. Fix $t\in H$. The function $s \mapsto w(s,t)$ is continuous. For any $\epsi > 0$, the set
\begin{equation*}
\{s\in G:|w(s,t)|\geq\epsi\}
\end{equation*}
is the projection onto $G$ of the compact set
\begin{equation*}
\{(s,r) \in G \times H : |w(s,r)|\geq\epsi\} \cap (G\times\{t\}).
\end{equation*}
Hence $w(\cdot,t) \in \C_0(G)$. Formula \eqref{eq-def-u-slice} expresses $u$ as a finite linear combination of such functions. Consequently, the function $u$ belongs to the space $\C_0(G)$.
\end{proof}

Now, we can prove the exact estimate for slice maps.

\begin{prop}
\label{prop-slice}
Let $G$ and $H$ be locally compact groups and let $f \in \C_c(G)$. For $w \in \B_2(G \times H) \cap \C_0(G \times H)$, define
\begin{equation}
\label{eq-def-Sf}
S_f(w)(t)
\ov{\mathrm{def}}{=}
\int_G f(s)w(s,t) \d\mu_G(s),
\quad t \in H.
\end{equation}
Then the function $S_f(w)$ belongs to the space $\B_2(H) \cap \C_0(H)$ and
\begin{equation}
\label{eq-Sf-bound}
\norm{S_f(w)}_{\B_2(H)}
\leq p_G(f) \norm{w}_{\B_2(G \times H)}.
\end{equation}
Moreover, we have
\begin{equation}
\label{eq-Sf-norm}
\norm{S_f}_{\B_2(G \times H) \cap \C_0(G \times H) \to \B_2(H) \cap \C_0(H)}
=p_G(f).
\end{equation}
\end{prop}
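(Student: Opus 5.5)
Fix $w \in \B_2(G \times H) \cap \C_0(G \times H)$ and put $v = S_f(w)$. We first record the elementary properties of $v$. Since $f$ has compact support $L$ and $w$ is uniformly continuous on $L \times$(compact neighbourhoods), $v$ is continuous by dominated convergence. Moreover,
\[
|v(t)| \leq \norm{f}_{\L^1(G)} \sup_{s \in L} |w(s,t)|,
\]
and for $\epsi>0$ the set $\{(s,r): |w(s,r)| \geq \epsi\}$ is compact, so its projection to $H$ is compact. Outside this projection we have $|v(t)| \leq \epsi \norm{f}_{\L^1}$, hence $v \in \C_0(H)$.

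For the $\B_2$ estimate we use the finite matrix characterization \eqref{eq-finite-B2}. Fix $h_1,\ldots,h_n \in H$, $A \in \M_n$ and unit vectors $\xi,\eta$. The quantity to bound is
\[
\Big|\sum_{i,j} \overline{\eta_i}\, a_{ij}\, \xi_j\, v(h_j^{-1}h_i)\Big| = \big|\la \eta, M_{[v(h_j^{-1}h_i)]}(A)\xi\ra\big|,
\]
since $\norm{M_{[v(h_j^{-1}h_i)]}}$ is the supremum of these over $\norm{A}\leq 1$ and unit $\xi,\eta$. By Fubini, this quantity equals $\big|\int_G f(s)u(s)\d\mu_G(s)\big|$ with $u$ as in \eqref{eq-def-u-slice}. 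Lemma \ref{lemma-matrix-slice} gives $u \in \B_2(G)\cap\C_0(G)$ with $\norm{u}_{\B_2(G)} \leq \norm{w}_{\B_2(G\times H)}\norm{A}$. By the definition \eqref{eq-def-pG} of $p_G$ (after scaling $u$), the quantity is therefore at most $p_G(f)\norm{w}\norm{A}$. Taking suprema and invoking \eqref{eq-finite-B2-criterion} yields $v \in \B_2(H)$ together with \eqref{eq-Sf-bound}. This step is straightforward once the lemma is available; the only care needed is the interchange of the finite sum with the integral.

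For the equality \eqref{eq-Sf-norm}, the bound $\leq$ is what we just proved. For $\geq$, take $u \in \B_2(G)\cap\C_0(G)$ with $\norm{u}\leq 1$, and take $v_0 \in \B_2(H)\cap\C_0(H)$ with $\norm{v_0}_{\B_2(H)} \leq 1$ and $v_0 \neq 0$; for example, $v_0 = \varphi * \tilde{\varphi}$ normalized, with $\varphi \in \C_c(H)$, a positive definite function in $A(H)$. Put $w = u \ot v_0$. By \eqref{eq-product-B2} we have $\norm{w}\leq 1$ and $w \in \C_0(G\times H)$, while
\[
S_f(w) = \Big(\int_G f u \,\d\mu_G\Big)\, v_0 .
\]
Hence
\[
\norm{S_f}\,\norm{v_0}_{\B_2(H)} \geq \Big|\int_G f u \,\d\mu_G\Big|\, \norm{v_0}_{\B_2(H)}.
\]
To get the sharp constant we need $\norm{v_0}_{\B_2(H)} = 1$. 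This holds for a normalized positive definite $v_0$ with $v_0(e)=1$, since then $\norm{v_0}_{\B_2}=v_0(e)=1$. The existence of such a $v_0$ in $\C_0(H)$ (for instance compactly supported) is the one point to secure. It is standard: take $\varphi*\tilde\varphi$ with $\norm{\varphi}_2=1$. Taking the supremum over $u$ then gives $\norm{S_f} \geq p_G(f)$.
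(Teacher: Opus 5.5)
Your proposal is correct and follows the paper's proof essentially step by step. Continuity and membership in $\C_0(H)$ come from the compact support of $f$. The bound \eqref{eq-Sf-bound} comes from the finite-matrix characterization applied to the function $u$ of Lemma~\ref{lemma-matrix-slice}. The reverse inequality uses $w=u\ot v_0$ with $v_0$ a normalized positive definite coefficient of the left regular representation.

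One small point: justify continuity of $S_f(w)$ by the uniform estimate $|S_f(w)(t)-S_f(w)(t_0)|\leq \norm{f}_{\L^1(G)}\sup_{s\in\supp f}|w(s,t)-w(s,t_0)|$, as the paper does, rather than by dominated convergence, which is not valid for nets.
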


\begin{proof}
We first check that the function in \eqref{eq-def-Sf} belongs to the space $\C_0(H)$. Let $K \ov{\mathrm{def}}{=}\supp f$. Since $K$ is compact, the map $H \to \C(K)$, $t \mapsto w(\cdot,t)|_K$ is continuous for the uniform norm. Indeed, let $t_0 \in H$ and $\epsi > 0$. By the joint continuity of $w$ and the compactness of $K$, there exists a neighbourhood $V$ of
$t_0$ such that
\begin{equation*}
\sup_{s \in K}|w(s,t)-w(s,t_0)| < \epsi,
\quad  t\in V.
\end{equation*}
Thus the map $t \mapsto w(\cdot,t)|_K$ is continuous from $H$ into $\C(K)$ equipped with the uniform norm. Hence $S_f(w)$ is continuous.

Now, suppose first that $f \neq 0$. Let $\epsi > 0$. Since $w \in \C_0(G \times H)$, there exists a compact subset $L$ of $G \times H$ such that
\begin{equation*}
|w(s,t)|
<\frac{\epsi}{\norm{f}_{\L^1(G)}}
\end{equation*}
whenever $(s,t) \notin L$. If $t \notin \operatorname{pr}_H(L)$, then $(s,t) \notin L$ for any $s\in G$, and therefore
\begin{equation*}
|S_f(w)(t)|
\ov{\eqref{eq-def-Sf}}{=} \left|\int_G f(s)w(s,t) \d\mu_G(s)\right|
\leq \int_G |f(s)||w(s,t)| \d\mu_G(s)
< \epsi.
\end{equation*}
Since $\operatorname{pr}_H(L)$ is compact, this proves that $S_f(w)$ belongs to the space $\C_0(H)$. The case $f=0$ is immediate.

It remains to prove the Herz--Schur estimate. Fix $n \geq 1$, $h_1,\ldots,h_n \in H$ and $A=[a_{ij}]\in \M_n(\Cbb)$. Let $\xi=(\xi_j)$ and $\eta=(\eta_i)$ be vectors in $\Cbb^n$. Define $u$ by \eqref{eq-def-u-slice}. By Lemma \ref{lemma-matrix-slice}, we have $u \in \B_2(G) \cap \C_0(G)$ and
\begin{equation}
\label{eq-u-bound-use}
\norm{u}_{\B_2(G)}
\ov{\eqref{eq-u-slice-bound}}{\leq}
\norm{w}_{\B_2(G \times H)} \norm{A} \norm{\xi}_{\ell^2_n} \norm{\eta}_{\ell^2_n}.
\end{equation}
Using the fact that the sum in \eqref{eq-def-u-slice} is finite, we obtain
\begin{align*}
\MoveEqLeft
\left|
\sum_{i,j=1}^n
\overline{\eta_i}a_{ij}\xi_j S_f(w)(h_j^{-1}h_i)
\right|
\ov{\eqref{eq-def-Sf}}{=} \left|\sum_{i,j=1}^n \overline{\eta_i}a_{ij}\xi_j \int_G f(s)w(s,(h_j^{-1}h_i)) \d\mu_G(s)\right| \\
&= \left|\int_G f(s)\sum_{i,j=1}^n
\overline{\eta_i}a_{ij}\xi_jw(s,(h_j^{-1}h_i)) \d\mu_G(s)\right|
\ov{\eqref{eq-def-u-slice}}{=} \left|\int_G f(s)u(s)\d\mu_G(s)\right| \\
&\ov{\eqref{eq-def-pG}}{\leq}
p_G(f)\norm{u}_{\B_2(G)}
\ov{\eqref{eq-u-bound-use}}{\leq}
p_G(f) \norm{w}_{\B_2(G \times H)} \norm{A} \norm{\xi}_{\ell^2_n} \norm{\eta}_{\ell^2_n}.
\end{align*}
The expression on the first line is the absolute value of the scalar matrix coefficient between $\xi$ and $\eta$ of
\begin{equation*}
M_{[S_f(w)(h_j^{-1}h_i)]_{i,j=1}^n}(A).
\end{equation*}
Taking the supremum over unit vectors $\xi$ and $\eta$, we get
\begin{equation*}
\norm{M_{[S_f(w)(h_j^{-1}h_i)]_{i,j=1}^n}(A)}
\leq
p_G(f) \norm{w}_{\mathrm{B}_2(G \times H)} \norm{A}.
\end{equation*}
Taking the supremum over $A \in \M_n(\Cbb)$ with $\norm{A}\leq 1$, and then using \eqref{eq-finite-B2}, yields \eqref{eq-Sf-bound}.

We finish with the reverse estimate in \eqref{eq-Sf-norm}. Choose a nonzero vector $\zeta \in \C_c(H)$ and define
\begin{equation*}
v_0(t)
\ov{\mathrm{def}}{=}
\frac{\la \lambda_t\zeta,\zeta\ra}{\norm{\zeta}_{\L^2(H)}^2},
\quad t\in H,
\end{equation*}
where $\lambda$ denotes the left regular representation of the group $H$. Then $v_0$ belongs to the Fourier algebra $\mathrm{A}(H)$. Hence $v_0 \in \C_0(H)$ by \cite[Corollary 2.3.5 p.~52]{KaL18}. Moreover, $v_0$ is positive definite, $v_0(e_H) = 1$ and
\begin{equation}
\label{eq-v0-norm}
\norm{v_0}_{\B_2(H)} 
= 1.
\end{equation}
Indeed, we have
\begin{equation*}
v_0(t^{-1}s)
=
\left\la
\frac{\lambda_s\zeta}{\norm{\zeta}_2},
\frac{\lambda_t\zeta}{\norm{\zeta}_2}
\right\ra,
\quad s,t\in H.
\end{equation*}
Hence the Hilbert space factorization of Herz--Schur multipliers gives $\norm{v_0}_{\B_2(H)} \leq 1$. On the other hand, $v_0(e_H)=1$, so \eqref{eq-uniform-vs-B2} gives $\norm{v_0}_{\B_2(H)} \geq 1$.

Let $u \in \B_2(G) \cap \C_0(G)$ with $\norm{u}_{\B_2(G)} \leq 1$. By \eqref{eq-product-B2}, the function $w=u\ot v_0$ belongs to the space $\B_2(G \times H) \cap \C_0(G \times H)$ and satisfies $\norm{w}_{\mathrm{B}_2(G \times H)}\leq 1$. Furthermore, we have
\begin{equation*}
S_f(w)(t)
\ov{\mathrm{def}}{=}
\int_G f(s)w(s,t) \d\mu_G(s)
=\int_G f(s) u(s) v_0(t) \d\mu_G(s)
=\int_G u(s)f(s) \d\mu_G(s)v_0(t).
\end{equation*}
Thus
\begin{equation*}
S_f(w)
=\la f,u \ra v_0.
\end{equation*}
Hence, by \eqref{eq-v0-norm}, we have
\begin{equation*}
\norm{S_f}_{\B_2(G \times H) \cap \C_0(G \times H) \to \B_2(H) \cap \C_0(H)}
\geq |\la f,u\ra|.
\end{equation*}
Taking the supremum over $u \in \B_2(G) \cap \C_0(G)$ with $\norm{u}_{\B_2(G)} \leq 1$ gives
\begin{equation*}
\norm{S_f}_{\B_2(G \times H) \cap \C_0(G \times H) \to \B_2(H) \cap \C_0(H)}
\geq p_G(f).
\end{equation*}
Together with \eqref{eq-Sf-bound}, this proves \eqref{eq-Sf-norm}.
\end{proof}

\section{Multiplicativity}
\label{sec:multiplicativity}

The slice estimate gives a multiplicativity formula for the seminorms introduced in \eqref{eq-def-pG}.

\begin{prop}
\label{prop-p-tensor}
Let $G$ and $H$ be locally compact groups. For any functions $f \in \C_c(G)$ and $g \in \C_c(H)$, we have
\begin{equation}
\label{eq-p-tensor}
p_{G \times H}(f \ot g)
= p_G(f) p_H(g),
\end{equation}
where $(f \ot g)(s,t) = f(s)g(t)$.
\end{prop}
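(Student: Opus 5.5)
We prove the two inequalities in \eqref{eq-p-tensor} separately.

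For the lower bound $p_{G \times H}(f \ot g) \geq p_G(f)p_H(g)$, we test against elementary tensors. Let $u \in \B_2(G) \cap \C_0(G)$ and $v \in \B_2(H) \cap \C_0(H)$, each of norm at most $1$. By \eqref{eq-product-B2} and the remark following it, the function $u \ot v$ lies in $\B_2(G \times H) \cap \C_0(G \times H)$ and has norm at most $1$. Since $f \ot g$ is integrable with respect to the product Haar measure $\mu_G \ot \mu_H$, Fubini's theorem gives
\begin{equation*}
\int_{G \times H} (f \ot g)(u \ot v) \d(\mu_G \ot \mu_H)
= \la f,u\ra \la g,v\ra .
\end{equation*}
Hence $p_{G\times H}(f \ot g) \geq |\la f,u\ra||\la g,v\ra|$. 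Taking suprema over $u$ and $v$ independently gives the lower bound.

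For the upper bound, fix $w \in \B_2(G \times H) \cap \C_0(G \times H)$ with $\norm{w}_{\B_2(G\times H)} \leq 1$. The function $(s,t)\mapsto f(s)g(t)w(s,t)$ is continuous and compactly supported, hence integrable, so Fubini's theorem applies:
\begin{equation*}
\int_{G \times H} (f\ot g)\, w \d(\mu_G \ot \mu_H)
= \int_H g(t) \left(\int_G f(s) w(s,t)\d\mu_G(s)\right)\d\mu_H(t)
= \la g, S_f(w)\ra .
\end{equation*}
By Proposition \ref{prop-slice}, $S_f(w)$ belongs to $\B_2(H)\cap\C_0(H)$ and $\norm{S_f(w)}_{\B_2(H)} \leq p_G(f)$.

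It remains to pass from this to a bound in terms of $p_H(g)$, treating two cases. If $p_G(f)>0$, the rescaled function $S_f(w)/p_G(f)$ is admissible in the definition \eqref{eq-def-pG} of $p_H(g)$, so $|\la g,S_f(w)\ra| \leq p_G(f)p_H(g)$. If $p_G(f)=0$, then $\norm{S_f(w)}_{\B_2(H)} = 0$, so $S_f(w)=0$ by \eqref{eq-uniform-vs-B2}, and the integral vanishes. In either case, taking the supremum over $w$ gives $p_{G\times H}(f\ot g) \leq p_G(f)p_H(g)$.

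The substantial difficulty is the slice estimate \eqref{eq-Sf-bound}, which is already established, so the remaining steps are routine. The only points needing care are the applicability of Fubini's theorem, which holds because the integrands are bounded, continuous and compactly supported, and the degenerate case $p_G(f)=0$ handled above.
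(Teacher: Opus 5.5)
Your proof is correct and follows the paper's argument. The lower bound comes from testing on elementary tensors $u \ot v$ via \eqref{eq-product-B2} and Fubini, and the upper bound from Fubini plus the slice estimate \eqref{eq-Sf-bound}. Your case split for $p_G(f)=0$ just spells out the homogeneity bound $|\la g,v\ra| \leq p_H(g)\norm{v}_{\B_2(H)}$, which the paper uses implicitly.
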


\begin{proof}
Let $w \in \B_2(G \times H) \cap \C_0(G \times H)$ with $\norm{w}_{\B_2(G \times H)} \leq 1$. Since the functions $f$ and $g$ have compact support and $w$ is bounded, the function
\begin{equation*}
(s,t) \mapsto f(s)g(t)w(s,t)
\end{equation*}
is integrable on the product $G \times H$. Hence Fubini's theorem and Proposition \ref{prop-slice} give
\begin{align*}
\MoveEqLeft
|\la f \ot g,w \ra| 
=\left| \int_{G \times H} g(t) f(s)w(s,t) \d(\mu_G \ot \mu_H)(s,t) \right|\\
&=\left| \int_H g(t) \int_G f(s)w(s,t) \d\mu_G(s) \d\mu_H(t) \right|
\ov{\eqref{eq-def-Sf}}{=}
\left|
\int_H g(t)S_f(w)(t) \d\mu_H(t)
\right| \\
&\ov{\eqref{eq-def-pG}}{\leq}
p_H(g)\norm{S_f(w)}_{\B_2(H)}
\ov{\eqref{eq-Sf-bound}}{\leq}
p_G(f)p_H(g)\norm{w}_{\B_2(G \times H)}
\leq p_G(f)p_H(g).
\end{align*}
Taking the supremum over the unit ball of $\B_2(G \times H) \cap \C_0(G \times H)$ gives
\begin{equation}
\label{eq-p-tensor-upper}
p_{G \times H}(f \ot g)
\leq p_G(f) p_H(g).
\end{equation}

For the converse estimate, consider some functions $u \in \B_2(G) \cap \C_0(G)$ and $v \in \B_2(H) \cap \C_0(H)$ such that
\begin{equation*}
\norm{u}_{\B_2(G)} \leq 1,
\quad \text{and} \quad
\norm{v}_{\B_2(H)} \leq 1.
\end{equation*}
By \eqref{eq-product-B2}, the function $u \ot v \co G \times H \to \mathbb{C}$ belongs to the unit ball of the space $\B_2(G \times H) \cap \C_0(G \times H)$. Therefore,
\begin{align*}
\MoveEqLeft
p_{G\times H}(f \ot g)
\ov{\eqref{eq-def-pG}}{\geq} 
|\la f\ot g,u \ot v\ra|
=
|\la f,u \ra| |\la g,v \ra|,
\end{align*}
where the last equality follows from Fubini's theorem. Taking independently the supremum over $u$ and $v$ proves the reverse inequality in \eqref{eq-p-tensor-upper}. This proves \eqref{eq-p-tensor}.
\end{proof}

Now, we can prove the main theorem.

\begin{thm}
\label{thm-main-bis}
Let $G$ and $H$ be locally compact groups. Then
\begin{equation}
\label{eq-main-bis}
\Lambda_{\WH}(G \times H)
=
\Lambda_{\WH}(G) \Lambda_{\WH}(H).
\end{equation}
\end{thm}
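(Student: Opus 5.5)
The inequality $\Lambda_{\WH}(G \times H) \leq \Lambda_{\WH}(G)\Lambda_{\WH}(H)$ is Knudby's result \eqref{eq-known-submultiplicativity}. So it remains to prove the reverse inequality $\Lambda_{\WH}(G)\Lambda_{\WH}(H) \leq \Lambda_{\WH}(G\times H)$. We may assume $\Lambda_{\WH}(G\times H) = D < \infty$; otherwise there is nothing to prove. All quantities are interpreted in $[0,\infty]$, with the usual convention for products involving $\infty$ (see below).

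The plan is to test the polar formula of Proposition \ref{prop-polar} for $G \times H$ on elementary tensors. Let $f \in \C_c(G)$ with $p_G(f) \leq 1$ and $g \in \C_c(H)$ with $p_H(g) \leq 1$. Then $f \ot g \in \C_c(G \times H)$, and Proposition \ref{prop-p-tensor} gives
\[
p_{G\times H}(f\ot g) = p_G(f)p_H(g) \leq 1.
\]
Applying Proposition \ref{prop-polar} to the group $G\times H$ and using Fubini's theorem, we obtain
\[
\left|\int_G f \d\mu_G\right| \left|\int_H g \d\mu_H\right| = \left|\int_{G\times H} f\ot g \d(\mu_G\ot\mu_H)\right| \leq D.
\]
Taking the supremum over $f$ and over $g$ independently, Proposition \ref{prop-polar} applied to $G$ and to $H$ yields $\Lambda_{\WH}(G)\Lambda_{\WH}(H) \leq D$.

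The only delicate point is the handling of infinite values and the convention $0\cdot\infty$. Each of $\Lambda_{\WH}(G)$ and $\Lambda_{\WH}(H)$ is at least $1$, since $\norm{u}_{\B_2} \geq \norm{u}_\infty$ and $u_\alpha \to 1$. Consequently both suprema are strictly positive, and the supremum of a product of independent nonnegative factors equals the product of the suprema in $[0,\infty]$. In particular, if $D<\infty$, this forces both $\Lambda_{\WH}(G)$ and $\Lambda_{\WH}(H)$ to be finite. Alternatively, this finiteness follows directly from \eqref{eq-subgroup-WH}, since $G$ and $H$ are closed subgroups of $G\times H$. The case $D = \infty$ is trivial. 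With both inequalities in hand, \eqref{eq-main-bis} follows.
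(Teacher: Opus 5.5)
Your proof is correct and follows the paper's route. The upper bound is Knudby's submultiplicativity, which the paper re-derives with product nets. The lower bound tests the polar formula for $G\times H$ on elementary tensors $f\ot g$, using Proposition \ref{prop-p-tensor} and Fubini's theorem.

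The only difference is in handling infinite values. You use $\Lambda_{\WH}\geq 1$ and the identity $\sup_{f,g} a_f b_g=(\sup_f a_f)(\sup_g b_g)$ for positive suprema. The paper instead runs an $\varepsilon$-argument when both constants are finite, and uses the closed-subgroup inequality when one of them is infinite.
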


\begin{proof}
We first prove the upper estimate. Suppose that both $\Lambda_{\WH}(G)$ and $\Lambda_{\WH}(H)$ are finite. Choose nets $(u_\alpha)$ in $\B_2(G) \cap \C_0(G)$ and $(v_\beta)$ in $\B_2(H) \cap \C_0(H)$ converging to $1_G$ and $1_H$, respectively, uniformly on compact subsets, and satisfying
\begin{equation}
\label{inter-21}
\norm{u_\alpha}_{\mathrm{B}_2(G)}
\leq
\Lambda_{\WH}(G)+\epsi
\quad \text{and} \quad
\norm{v_\beta}_{\B_2(H)}
\leq
\Lambda_{\WH}(H)+\epsi.
\end{equation}
The product net $(u_\alpha \ot v_\beta)_{(\alpha,\beta)}$ belongs to the space $\B_2(G \times H) \cap \C_0(G \times H)$ and converges to $1_{G \times H}$ uniformly on compact subsets of the group $G \times H$. We have
\begin{equation*}
\norm{u_\alpha \ot v_\beta}_{\B_2(G \times H)}
\ov{\eqref{eq-product-B2}}{\leq} \norm{u_\alpha}_{\B_2(G)} \norm{v_\beta}_{\B_2(H)}
\ov{\eqref{inter-21}}{\leq}
(\Lambda_{\WH}(G)+\epsi)
(\Lambda_{\WH}(H)+\epsi).
\end{equation*}
Letting $\epsi \to 0$, we obtain the inequality
\begin{equation}
\label{eq-upper-main-proof}
\Lambda_{\WH}(G \times H)
\leq
\Lambda_{\WH}(G) \Lambda_{\WH}(H).
\end{equation}
If one of the two constants is infinite, this inequality is automatic.

Now, we prove the reverse estimate. Assume first that $\Lambda_{\WH}(G) < \infty$ and $\Lambda_{\WH}(H) < \infty$. Let $0<\epsi<\min\{\Lambda_{\WH}(G),\Lambda_{\WH}(H)\}$. By Proposition \ref{prop-polar}, there exist functions $f \in \C_c(G)$ and $g \in \C_c(H)$ such that $p_G(f) \leq 1$, $p_H(g) \leq 1$,
\begin{equation}
\label{eq-near-polar}
\left|\int_G f(s) \d\mu_G(s)\right| 
> \Lambda_{\WH}(G)-\epsi,
\quad \text{and} \quad
\left|\int_H g(t) \d\mu_H(t)\right| 
> \Lambda_{\WH}(H)-\epsi.
\end{equation}
Proposition \ref{prop-p-tensor} gives
\begin{equation*}
p_{G\times H}(f \ot g)
\ov{\eqref{eq-p-tensor}}{=}  p_G(f)p_H(g)
\leq 1.
\end{equation*}
Applying Proposition \ref{prop-polar} to the group $G \times H$, and using Fubini's theorem, we obtain
\begin{align*}
\MoveEqLeft
\Lambda_{\WH}(G \times H)
\geq \left| \int_{G \times H} f(s)g(t) \d(\mu_G \ot \mu_H)(s,t) \right|\\
&=
\left|\int_G f(s) \d\mu_G(s)\right|
\left|\int_H g(t) \d\mu_H(t)\right|
\ov{\eqref{eq-near-polar}}{>}
(\Lambda_{\WH}(G)-\epsi)(\Lambda_{\WH}(H)-\epsi).
\end{align*}
Letting $\epsi \to 0$, we deduce that
\begin{equation}
\label{eq-lower-main-proof}
\Lambda_{\WH}(G \times H)
\geq
\Lambda_{\WH}(G) \Lambda_{\WH}(H).
\end{equation}

Finally, suppose that one of the two constants is infinite, say $\Lambda_{\WH}(G)=\infty$. The subgroup $G\times\{e_H\}$ is closed in $G\times H$ and is isomorphic to $G$. Thus
\begin{equation*}
\infty
=
\Lambda_{\WH}(G)
\ov{\eqref{eq-subgroup-WH}}{\leq} 
\Lambda_{\WH}(G \times H).
\end{equation*}
So $\Lambda_{\WH}(G \times H)=\infty$. Combining this observation with \eqref{eq-upper-main-proof} and \eqref{eq-lower-main-proof} proves \eqref{eq-main} in all cases.
\end{proof}

By induction, we immediately obtain the following extension.

\begin{cor}
\label{cor-finite-products}
Let $G_1,\ldots,G_n$ be locally compact groups. Then
\begin{equation*}
\Lambda_{\WH}(G_1 \times \cdots \times G_n)
=
\prod_{k=1}^n \Lambda_{\WH}(G_k).
\end{equation*}
\end{cor}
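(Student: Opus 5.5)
We argue by induction on $n$, using Theorem \ref{thm-main-bis} as the inductive step. For $n=1$ there is nothing to prove. For $n=2$ the statement is exactly Theorem \ref{thm-main-bis}.

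Assume that the formula holds for $n-1$ groups, with $n \geq 2$. Put $G' \ov{\mathrm{def}}{=} G_1 \times \cdots \times G_{n-1}$, equipped with the product topology; it is again a locally compact group. The canonical map
\begin{equation*}
G_1 \times \cdots \times G_n \longrightarrow G' \times G_n
\end{equation*}
is an isomorphism of topological groups, and it carries the product Haar measure to the product Haar measure. The weak Haagerup constant is invariant under such isomorphisms, because composition with a topological isomorphism preserves the Herz--Schur norm (by the finite matrix formula \eqref{eq-finite-B2}), the space $\C_0$, and uniform convergence on compact sets. We can therefore apply Theorem \ref{thm-main-bis} to the pair $(G',G_n)$ and then the induction hypothesis to $G'$:
\begin{equation*}
\Lambda_{\WH}(G_1\times\cdots\times G_n)=\Lambda_{\WH}(G')\Lambda_{\WH}(G_n)=\prod_{k=1}^{n}\Lambda_{\WH}(G_k).
\end{equation*}

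These products are taken in $[1,\infty]$, with the convention $a\cdot\infty=\infty$ for $a \geq 1$. This convention is consistent because each constant satisfies $\Lambda_{\WH}\geq 1$: a net converging to $1$ pointwise has sup-norm eventually close to $1$, and \eqref{eq-uniform-vs-B2} then bounds the Herz--Schur norm from below. Theorem \ref{thm-main-bis} already covers the case where one factor is infinite.

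No step poses a real obstacle. The only point needing care is the identification of the iterated product $G' \times G_n$ with $G_1\times\cdots\times G_n$, including the Haar measures, and this is routine.
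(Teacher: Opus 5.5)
Your proof is correct and follows the paper's approach: the paper simply says the corollary follows from Theorem~\ref{thm-main} by induction. Your identification $G_1\times\cdots\times G_n\cong (G_1\times\cdots\times G_{n-1})\times G_n$ and your observation that $\Lambda_{\WH}\geq 1$ (which makes products involving $\infty$ unambiguous) just spell out the routine details; the Haar measure bookkeeping is harmless but unnecessary, since $\Lambda_{\WH}$ is defined without reference to Haar measure.
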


\section{Consequences and remarks}
\label{sec:consequences}

If $\Gamma$ is a discrete group, Knudby proved in \cite[Theorem B p.~3473]{Knu16} that $\Gamma$ has the weak Haagerup property if and only if its group von Neumann algebra $\VN(\Gamma)$ has the weak Haagerup property. More precisely, he proved that
\begin{equation}
\label{eq-group-vn}
\Lambda_{\WH}(\Gamma)
=\Lambda_{\WH}(\VN(\Gamma)),
\end{equation}
where the group von Neumann algebra $\VN(\Gamma)$ is equipped with its canonical normal faithful  tracial state.
\begin{cor}
\label{cor-vn-products}
Let $\Gamma_1$ and $\Gamma_2$ be discrete groups. Then
\begin{equation*}
\Lambda_{\WH}(\VN(\Gamma_1) \otvn \VN(\Gamma_2))
=\Lambda_{\WH}(\VN(\Gamma_1)) \Lambda_{\WH}(\VN(\Gamma_2)).
\end{equation*}
\end{cor}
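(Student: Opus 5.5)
The plan is to reduce Corollary \ref{cor-vn-products} to Theorem \ref{thm-main-bis}, using Knudby's identity \eqref{eq-group-vn} and the canonical identification of the von Neumann tensor product of group von Neumann algebras with the group von Neumann algebra of the direct product. The product $\Gamma_1 \times \Gamma_2$ is again a discrete group, so \eqref{eq-group-vn} applies to it as well as to $\Gamma_1$ and $\Gamma_2$.

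The first step is to recall that the unitary $\ell^2(\Gamma_1)\ot_2 \ell^2(\Gamma_2) \to \ell^2(\Gamma_1\times\Gamma_2)$, $\delta_{s}\ot\delta_{t}\mapsto \delta_{(s,t)}$, intertwines $\lambda_{s}\ot\lambda_{t}$ with $\lambda_{(s,t)}$. The von Neumann algebra $\VN(\Gamma_1)\otvn\VN(\Gamma_2)$ is generated by the operators $\lambda_s\ot\lambda_t$, so this unitary implements a normal $*$-isomorphism
\[
\VN(\Gamma_1)\otvn\VN(\Gamma_2)\cong \VN(\Gamma_1\times\Gamma_2).
\]
Next I check that this isomorphism carries the tensor product trace $\tau_1\ot\tau_2$ to the canonical trace $\tau$ of $\VN(\Gamma_1\times\Gamma_2)$. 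Both are normal states, and on the generators they agree:
\[
(\tau_1\ot\tau_2)(\lambda_s\ot\lambda_t)=\delta_{s,e}\,\delta_{t,e}=\tau(\lambda_{(s,t)}).
\]
By linearity and normality they then agree on the whole algebra.

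The weak Haagerup constant of a finite von Neumann algebra equipped with a normal faithful tracial state is invariant under trace-preserving $*$-isomorphisms. This holds because the defining approximating maps transport along the isomorphism, preserving all the relevant norms and the trace. Combining the pieces gives
\[
\Lambda_{\WH}(\VN(\Gamma_1)\otvn\VN(\Gamma_2))=\Lambda_{\WH}(\VN(\Gamma_1\times\Gamma_2))\ov{\eqref{eq-group-vn}}{=}\Lambda_{\WH}(\Gamma_1\times\Gamma_2)\ov{\eqref{eq-main-bis}}{=}\Lambda_{\WH}(\Gamma_1)\Lambda_{\WH}(\Gamma_2).
\]
Applying \eqref{eq-group-vn} once more to each factor turns the right-hand side into $\Lambda_{\WH}(\VN(\Gamma_1))\Lambda_{\WH}(\VN(\Gamma_2))$. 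The equality is understood in $[0,\infty]$, since Theorem \ref{thm-main-bis} also covers the infinite cases.

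The only potential obstacle is making sure that \eqref{eq-group-vn} is stated for the trace on the tensor product that one intends. The tensor product is taken with respect to $\tau_1\ot\tau_2$, and this is precisely why I verify on generators that the identification is trace-preserving. Everything else is formal.
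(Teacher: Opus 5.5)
Your proposal is correct and follows the paper's proof exactly. Like the paper, you transport along the canonical trace-preserving isomorphism $\VN(\Gamma_1\times\Gamma_2)\cong\VN(\Gamma_1)\otvn\VN(\Gamma_2)$, then apply \eqref{eq-group-vn} to $\Gamma_1\times\Gamma_2$, Theorem \ref{thm-main}, and \eqref{eq-group-vn} again to each factor; your check that the isomorphism preserves the trace (two normal states agreeing on the weak$^*$-dense span of the $\lambda_{(s,t)}$) just spells out what the paper states in one phrase.
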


\begin{proof}
The canonical trace preserving $*$-isomorphism $\VN(\Gamma_1 \times \Gamma_2) \to \VN(\Gamma_1) \otvn \VN(\Gamma_2)$, $\lambda_{(s,t)}\mapsto \lambda_s \ot \lambda_t$ combined with \eqref{eq-group-vn} and Theorem \ref{thm-main}, gives
\begin{align*}
\MoveEqLeft
\Lambda_{\WH}(\VN(\Gamma_1) \otvn \VN(\Gamma_2))
=\Lambda_{\WH}(\VN(\Gamma_1 \times \Gamma_2))
\ov{\eqref{eq-group-vn}}{=} \Lambda_{\WH}(\Gamma_1 \times \Gamma_2) \\
&\ov{\eqref{eq-main}}{=}
\Lambda_{\WH}(\Gamma_1) \Lambda_{\WH}(\Gamma_2)
\ov{\eqref{eq-group-vn}}{=} \Lambda_{\WH}(\VN(\Gamma_1)) \Lambda_{\WH}(\VN(\Gamma_2)).
\end{align*}
This proves the assertion.
\end{proof}


\paragraph{Competing interests} The author declares that he has no competing interests.

\paragraph{Data availability} No data sets were generated during this study.

{\footnotesize

\vspace{0.2cm}

\noindent C\'edric Arhancet\\
\noindent 6 rue Didier Daurat, 81000 Albi, France\\
URL: \href{http://sites.google.com/site/cedricarhancet}{https://sites.google.com/site/cedricarhancet}\\
cedric.arhancet@protonmail.com\\
ORCID: 0000-0002-5179-6972

}

\end{document}